\newtheorem{theorem}{Theorem}
\newtheorem{proposition}[theorem]{Proposition}
\newtheorem{lemma}[theorem]{Lemma}
\newtheorem{cor}[theorem]{Corollary}
\newtheorem{remark}{Remark}
\DeclareRobustCommand\widecheck[1]{{\mathpalette\@widecheck{#1}}}
\def\@widecheck#1#2{%
    \setbox\z@\hbox{\m@th$#1#2$}%
    \setbox\tw@\hbox{\m@th$#1%
       \widehat{%
          \vrule\@width\z@\@height\ht\z@
          \vrule\@height\z@\@width\wd\z@}$}%
    \dp\tw@-\ht\z@
    \@tempdima\ht\z@ \advance\@tempdima2\ht\tw@ \divide\@tempdima\thr@@
    \setbox\tw@\hbox{% 
       \raise\@tempdima\hbox{\scalebox{1}[-1]{\lower\@tempdima\box
\tw@}}}%
    {\ooalign{\box\tw@ \cr \box\z@}}}
\newcommand{\ep}{\epsilon}
\newcommand{\al}{\alpha}
\DeclareMathOperator{\sgn}{sgn}
\DeclareMathOperator{\sinc}{sinc}
\DeclareMathOperator{\Res}{Res}
\newcommand{\ds}{\displaystyle}
\newcommand{\be}{\begin{equation}}
\newcommand{\ee}{\end{equation}}
\newcommand{\bes}{\begin{equation*}}
\newcommand{\ees}{\end{equation*}}
\newcommand{\mand}{\quad \text{and}\quad}
\newcommand{\R}{{\bf{R}}}
\newcommand{\Z}{{\bf{Z}}}
\newcommand{\G}{{\mathcal{G}}}
\newcommand{\F}{{\mathfrak{F}}}
\renewcommand{\H}{{\mathcal{H}}}
\newcommand{\E}{{\mathcal{E}}}
\renewcommand{\tilde}{\widetilde}
\renewcommand{\hat}{\widehat}
\renewcommand{\check}{\widecheck}
\newcommand{\bunderbrace}[2]{%
  \begin{array}[t]{@{}c@{}}
  \underbrace{#1}\\
  #2
  \end{array}
}
\title{Approximation of Calogero-Moser lattices by Benjamin-Ono equations}
\author{J. Douglas Wright}
\begin{document}
\begin{abstract}We provide a rigorous validation that the infinite Calogero-Moser lattice can be well-approximated by solutions of the Benjamin-Ono equation in a long-wave limit.\end{abstract}
\maketitle

\section{Introduction}
The (generalized\footnote{It is {\it the} Calogero-Moser system when $\al=2$.}) Calogero-Moser system is
\be\label{CM}
\ddot{x}_j= -\alpha \sum_{m \ge 1} \left[  {1 \over \left(x_{j+m}-x_j\right)^{\alpha+1}} - {1 \over \left(x_j-x_{j-m} \right)^{\alpha+1}}\right].
\ee
In the above, $j \in \Z$, $x_j \in \R$, $t \in \R$.  The system can be interpreted as the governing equations for the positions ($x_j(t)$) of infinitely many particles arranged on a line and interacting pairwise through a power-law force.

Ingimarson \& Pego in \cite{IP} state that for $\alpha \in (1,3)$ and in a certain scaling regime (the so-called {\it long-wave limit}) the system is formally approximated by a 
Benjamin-Ono type equation. 
%In this paper we validate the formal approximation with a rigorous, quantitive result.
Here is a quick summary of their findings.
Suppose that $u=u(X,\tau)$ solves the (generalized\footnote{It is {\it the} Benjamin-Ono equation when $\al=2$.}) Benjamin-Ono equation
\be\label{BO}
\kappa_1 \partial_\tau u +\kappa_2 u \partial_X u + \kappa_3 H|D|^\alpha u=0.
\ee
Here $H$ is the Hilbert transform on $\R$ and $|D| = H\partial_X$. We define these as Fourier\footnote{We 
use the following form of the Fourier transform: $\ds \F[f](k) := \hat{f}(k):=(2 \pi)^{-1} \int_\R f(X) e^{-ikX} dX$
and $\F^{-1}[g](X)\ds:=\check{g}(X):=\int_\R g(k) e^{ikX} dk$.
We use the Fourier transform to  define Sobolev norms in the usual way:
$\| f \|_{H^s}:=\ds\sqrt{\int_\R (1+k^2)^s |\hat{f}(k)|^2 dk}$.} multiplier operators:
$$
\hat{H f}(k) := - i \sgn(k) \hat{f}(k) \mand \hat{|D|^\alpha f}(k) := |k|^\alpha \hat{f}(k). 
$$
The constants $\kappa_1$, $\kappa_2$ and $\kappa_3$ are determined from $\al$ by
\be\label{constants}\begin{split}
c_\alpha& := \sqrt{\alpha (\alpha+1) \zeta_\alpha},\quad \kappa_1 := 2 c_\alpha,\quad \kappa_2:=\al(\al+1)(\al+2) \zeta_\al\\
&\mand \kappa_3:=\al(\al+1) \int_0^\infty {1 -\sinc^2(s/2) \over s^\al} ds.
\end{split}\ee
Here $\ds \zeta_s: = \sum_{m \ge 1} 1/m^{s}$ is the ballyhooed zeta-function.

In \cite{IP} the authors show that if $u = -\partial_X v$ and 
\be\label{ansatz}
\tilde{x}_j(t) := j + \tilde{y}_j(t) \mand \tilde{y}_j(t):=\ep^{\al -2} v(\ep(j-c_\alpha t),\ep^{\al} t)
\ee
%(where $p = \al-2$, $q = \al$) 
then
\bes\label{this is R}
R_\ep(j,t):=\ddot{\tilde{x}}_j+\alpha\sum_{m \ge 1} \left[ {1 \over (\tilde{x}_{j+m} - \tilde{x}_j)^{\alpha+1}} -{1 \over (\tilde{x}_{j} + \tilde{x}_{j-m})^{\alpha+1}} 
  \right]
\ees
is formally $o(\ep^{2\alpha-1})$ as $\ep \to 0^+$. We call $R_\ep$ {\it the residual} and it indicates the amount by which the approximation fails to satisfy \eqref{CM}. 
The scaling in \eqref{ansatz} is what is referred to as  the {\it  long-wave scaling}.

Our goal here is to provide a quantitative and rigorous error estimate
on the difference of true solutions of \eqref{CM}  and the approximate solution described in \cite{IP}.
Our main result is
\begin{theorem}\label{main}
There exists $\al_* \in (1.45,1.5)$ such that the following holds for $\al \in (\al_*,3)$.
Let $$\gamma_\alpha := \begin{cases} 
2\al-5/2, & \alpha \in (1,2] \\
3/2, & \alpha \in (2,3)
\end{cases}$$
and determine $c_\al$, $\kappa_1$, $\kappa_2$ and $\kappa_3$ as in \eqref{constants}.
Suppose that, for some $\tau_0>0$,  $u(X,\tau)$ solves \eqref{BO} for $|\tau| \le \tau_0$ and
$
\sup_{|\tau|\le \tau_0} \| u(\cdot,\tau) \|_{H^6} < \infty.
$ 
Then there exists $C_1,C_2,\ep_*>0$ so the following holds for $\ep \in (0,\ep_*]$.

If the initial data for \eqref{CM} satisfies
\bes
x_{j+1}(0)-x_j(0) - 1 =- \ep^{\al-1} u(\ep j, 0) + \bar{\mu}_j \mand
\dot{x}_j(0) = c_\al \ep^{\al-1} u(\ep j,0) + \bar{\nu}_j
\ees
where $$\|\bar{\mu}\|_{\ell^2} \le C_1 \ep^{\gamma_\al} \mand \|\bar{\nu}\|_{\ell^2} \le C_1 \ep^{\gamma_\al}$$
then the solution of \eqref{CM} satisifies
\bes\begin{split}
r_j(t)&:=x_{j+1}(t)-x_j(t) - 1 =- \ep^{\al-1} u(\ep (j-c_\al t),\ep^\al t)+ \mu_j(t) \\\mand
p_j(t)&:=\dot{x}_j(t) = c_\al \ep^{\al-1} u(\ep (j-c_\al t),\ep^\al t) + \nu_j(t)
\end{split}\ees
where
$$
\sup_{|t|\le \tau/\ep^\al} \|\mu(t)\|_{\ell^2} \le C_2 \ep^{\gamma_\al} \mand \sup_{|t|\le \tau/\ep^\al} \|{\nu}(t)\|_{\ell^2} \le C_2 \ep^{\gamma_\al}.
$$
\end{theorem}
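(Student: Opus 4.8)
The plan is to recast \eqref{CM} as a first-order Hamiltonian lattice system in the bond and momentum variables $r_j = x_{j+1}-x_j-1$ and $p_j = \dot x_j$, and then run a long-time energy estimate on the difference between the true solution and the approximation $\tilde r_j = -\ep^{\al-1}u(\ep(j-c_\al t),\ep^\al t)$, $\tilde p_j = c_\al\ep^{\al-1}u(\ep(j-c_\al t),\ep^\al t)$. Writing $S$ for the shift $(Sf)_j=f_{j+1}$ and $W(r)=\nabla_r V(r)$ for the bond force of the pair potential $V$ whose gradient reproduces \eqref{CM}, a direct computation puts the system in the skew form $\dot r=(S-1)p$ and $\dot p=(1-S^{-1})W(r)$. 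Because $\tilde r_j=\tilde y_{j+1}-\tilde y_j$ and $\tilde p_j=\dot{\tilde y}_j$, the approximation satisfies the first equation \emph{exactly}, so the entire residual lives in the second: $\partial_t\tilde p-(1-S^{-1})W(\tilde r)=R_\ep$. The quantitative input I assume from a separate residual lemma (obtained by Taylor-expanding the nonlocal sum and matching the lattice symbol to the $|D|^\al$ symbol of \eqref{BO}) is the sharpened bound $\sup_{|t|\le\tau_0/\ep^\al}\|R_\ep(\cdot,t)\|_{\ell^2}\lesssim\ep^{\gamma_\al+\al}$.

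Subtracting, the error $(\mu,\nu):=(r-\tilde r,\,p-\tilde p)$ obeys $\dot\mu=(S-1)\nu$ and $\dot\nu=(1-S^{-1})\bigl[W(\tilde r+\mu)-W(\tilde r)\bigr]+R_\ep$. I would control it with the second-variation energy $\mathcal{E}:=\tfrac12\|\nu\|_{\ell^2}^2+\tfrac12\langle DW(\tilde r)\mu,\mu\rangle$. The Hessian $DW(0)$ is convolution with symbol $\hat A(k)=(\al+1)\sum_{m\ge1}m^{-(\al+2)}\,\bigl|\sin(mk/2)/\sin(k/2)\bigr|^2$, which is strictly positive and bounded below uniformly on $[-\pi,\pi]$ (spectral stability of the lattice), so for $\ep$ small $\mathcal{E}$ is equivalent to $\|\mu\|_{\ell^2}^2+\|\nu\|_{\ell^2}^2$. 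Differentiating $\mathcal{E}$, the skew quadratic terms built from $DW(\tilde r)$ cancel by self-adjointness exactly as at equilibrium; the price of evaluating the Hessian at $\tilde r$ rather than $0$ is naively $O(\ep^{\al-1})$, but it is precisely a total time derivative, which is why I fold it into $\mathcal{E}$. What survives is $\tfrac12\langle\partial_t(DW(\tilde r))\mu,\mu\rangle=\tfrac12 D^3V(\tilde r)[\partial_t\tilde r,\mu,\mu]$, and since $\tilde r$ is a slowly modulated travelling wave one has $\|\partial_t\tilde r\|_{\ell^\infty}\lesssim\ep^\al$, so this term is $O(\ep^\al)\|\mu\|_{\ell^2}^2$.

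The genuinely nonlinear contribution $\langle\nu,(1-S^{-1})N(\mu)\rangle$ with $N(\mu):=W(\tilde r+\mu)-W(\tilde r)-DW(\tilde r)\mu$ is handled the same way: using $\dot\mu=(S-1)\nu$ and the gradient structure $N=\nabla_\mu\Psi$, $\Psi(\mu):=V(\tilde r+\mu)-V(\tilde r)-\langle W(\tilde r),\mu\rangle-\tfrac12\langle DW(\tilde r)\mu,\mu\rangle$, it equals $-\tfrac{d}{dt}\Psi(\mu)$ plus the explicit-time remainder $\partial_t\Psi\lesssim\ep^\al\|\mu\|^3$. Absorbing $\Psi$ (which is $O(\|\mu\|^3)$, hence negligible against $\mathcal{E}$) yields a modified energy satisfying $\tfrac{d}{dt}\mathcal{E}\lesssim\ep^\al\,\mathcal{E}+\|R_\ep\|_{\ell^2}\,\mathcal{E}^{1/2}$. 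Gronwall over $|t|\le\tau_0/\ep^\al$ turns the $\ep^\al$ rate into an $O(1)$ amplification $e^{C\tau_0}$; the initial bound $\mathcal{E}(0)^{1/2}\lesssim\ep^{\gamma_\al}$ and the accumulated residual $\int_0^{\tau_0/\ep^\al}\|R_\ep\|_{\ell^2}\,dt\lesssim\ep^{-\al}\cdot\ep^{\gamma_\al+\al}=\ep^{\gamma_\al}$ then give $\sup_t\mathcal{E}^{1/2}\lesssim\ep^{\gamma_\al}$, i.e. the claimed control of $\mu$ and $\nu$. A continuation/bootstrap argument closes the superquadratic terms since $\ep^{\gamma_\al}\ll1$, and the value $\gamma_\al$ is exactly what the residual lemma dictates through $\gamma_\al=(\gamma_\al+\al)-\al$, with the kink at $\al=2$ reflecting the loss of analyticity of $|D|^\al$.

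I expect the main obstacle to be the energy estimate rather than the residual computation. Concretely, two things must be verified: the uniform lower bound $\hat A(k)\ge\delta>0$ (genuine convexity/stability of the long-range lattice), and, more delicately, the $\ell^2$-boundedness of the nonlocal multilinear operators $DW$, $D^2V$, $D^3V$ with the gains used above, together with the commutator estimates between the difference operators $S^{\pm1}-1$ and the slowly-varying coefficients $DW(\tilde r)$. The higher-order kernels involve sums $\sum_m m^{-(\al+2)}(\cdots)$ whose summability, after the Cauchy--Schwarz bookkeeping needed to land in $\ell^2$, degrades as $\al\downarrow1$; this is the true origin of the restriction $\al>\al_*$. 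The derivative count $\sup_{|\tau|\le\tau_0}\|u\|_{H^6}<\infty$ is what feeds both the residual lemma (several orders of expansion) and the commutator terms (each factor of $\partial_t\tilde r$ costing a derivative of $u$).
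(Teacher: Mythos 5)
Your overall architecture matches the paper's: pass to relative displacement/velocity coordinates, control the error with a modified energy built from the second--order Taylor remainder of the potential (your $\mathcal{E}+\Psi$ is exactly the paper's $\H$ with its $W_m(a,b)=V_m(b+a)-V_m(b)-V_m'(b)a$), bound its derivative by $C\ep^\al\H+C\|R_\ep\|_{\ell^2}\sqrt{\H}$, and apply Gr\"onwall over $|t|\le\tau_0\ep^{-\al}$; taking the residual bound $\ep^{\gamma_\al+\al}$ as a black box also mirrors the paper's modular structure (Proposition \ref{Res est 1}). But there is a genuine gap at the starting point. Your stated approximators $\tilde r_j=-\ep^{\al-1}u(\ep(j-c_\al t),\ep^\al t)$ and $\tilde p_j=c_\al\ep^{\al-1}u(\ep(j-c_\al t),\ep^\al t)$ do \emph{not} satisfy $\dot{\tilde r}_j=\delta_1^+\tilde p_j$ exactly, and they are not the difference and time derivative of any single position ansatz: differencing $\tilde x_j=j+\ep^{\al-2}v(\ep(j-c_\al t),\ep^\al t)$ produces the averaged quantity $-\ep^{\al-1}A_\ep u$, not $-\ep^{\al-1}u$, and differentiating it in time produces an extra $\ep^{2\al-2}v_\tau$ term. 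With your formulas one computes
$$
\Res_1=\delta_1^+\tilde p_j-\dot{\tilde r}_j=c_\al\ep^{\al}\left(A_\ep-1\right)u_X+\ep^{2\al-1}u_\tau,
\qquad \|\Res_1\|_{\ell^2}\sim \ep^{2\al-3/2},
$$
which violates the hypothesis \eqref{res hyp} needed for the Gr\"onwall argument (one needs $\ep^{\beta_\al}=\ep^{\gamma_\al+\al}$, and $2\al-3/2<\gamma_\al+\al$ for every $\al\in(1,3)$). After integration over the time horizon $\ep^{-\al}$ this term contributes $\sim\ep^{\al-3/2}$ to the error, which exceeds not only the claimed $\ep^{\gamma_\al}$ but even the amplitude $\ep^{\al-1/2}$ of the approximation itself, so the estimate as you set it up yields nothing. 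The paper is structured precisely to dodge this: the energy argument is run with the position-based approximators, for which $\Res_1=0$ and $\Res_2=-R_\ep$ exactly, and only \emph{outside} the Gr\"onwall loop --- at $t=0$ to absorb $\bar\mu,\bar\nu$, and at the end to state the conclusion --- does one convert to the plain sampled formulas, paying un-amplified prices $O(\ep^{\al-1/2})$ and $O(\ep^{2\al-3/2})$ that must be (and are) checked to lie below $\ep^{\gamma_\al}$; see \eqref{tr est} and \eqref{tp est}. This conversion layer is absent from your proposal, and it is not optional.

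A secondary issue: you attribute the restriction $\al>\al_*$ to ``summability of higher-order kernels as $\al\downarrow1$,'' and that is not where it comes from --- every kernel sum in the argument ($\zeta_\al$, $\zeta_{\al+1}$, and the like) converges for all $\al>1$. In the paper the restriction arises solely in the coercivity step: the lower bound on the quadratic part of the energy is obtained by splitting off the diagonal and applying Cauchy--Schwarz to the off-diagonal sum, which produces the constant $2\zeta_{\al+1}-\zeta_\al$ and hence the condition $2\zeta_{\al+1}-\zeta_\al>0$, i.e.\ $\al>\al_*\in(1.45,1.5)$ (Lemma \ref{P equivalence} and Proposition \ref{W prop}). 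Ironically, your Fourier-symbol claim is correct: the multiplier $\sum_{m\ge1}m^{-(\al+2)}\sin^2(mk/2)/\sin^2(k/2)$ is bounded below by its $m=1$ term, which is identically $1$. So your route, had you actually carried out the perturbation from $DW(0)$ to the Hessian at $\tilde r$ (you only assert it), would prove the theorem for all $\al\in(1,3)$ with no threshold --- acceptable, indeed stronger, since the theorem only asserts existence of some $\al_*$. As written, however, the proposal neither derives the paper's condition nor substantiates its own account of where a restriction would be needed.
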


\begin{remark} 
The theorem presents the absolute error made in the approximation. To compute the relative error we 
note that the long-wave scaling $X = \ep j$ implies
$\|\ep^{\al-1}u(\ep (\cdot-c_\al t),\ep^\al t)\|_{\ell^2} \le C \ep^{\al - 1/2}$ (see
estimate (4.8) from Lemma 4.3 in \cite{GMWZ}). This leads to a relative error like $C \ep^{\gamma_\al - \al + 1/2}=C\ep^{1-|\al -2|}$.
We do think the error estimates we compute here are sharp, though we do not have a proof of that.
\end{remark}

\begin{remark}
In our proof, it comes out that we need $2 \zeta_{\al+1}-\zeta_\al > 0$ and it is here that the restriction on $\al$ comes from. See Figure \ref{zetafig} below. We do not claim the condition is necessary, but it does arise in a somewhat natural way.
\end{remark}

\begin{remark}
The use of $H^6$ in the theorem is a worst-case scenario. It works for all $\al \in (1,3)$. If one wanted, one could determine a lower regularity condition on $u$ which would depend on $\al$. There is no pressing need for that in this article. One may wonder if $H^6$ solutions of \eqref{BO} exist. The short answer is yes. To get more information, the introduction of \cite{hur} gives a terrific overview.
\end{remark}

\begin{remark} For $\al =2$, there are known connections between special solutions of \eqref{CM}
and \eqref{BO}, which rely in part on the fact that both systems are integrable, 
see for instance \cite{matsuno}. These results are 
complementary to the discovery in \cite{IP} that the two systems are connected in the long-wave limit.
\end{remark}

\begin{remark} The Benjamin-Ono equation has served as  long-wave limit in a variety of hydrodynamic problems, see \cite{BLC} for an overview. 
The recent article \cite{IRTW} contains a rigorous validation of one such limit and is similar in spirit
to the result here.
\end{remark}

Here is the plan of attack.
First we make the formal estimates on $R_\ep$ from \cite{IP} rigorous in Section \ref{res est}.
Then we prove a general approximation theorem in Section \ref{gen section}. Lastly, in Section \ref{main thm section} we put things together in the proof of Theorem \ref{main}.

\section{Rigorous residual estimates}\label{res est}
The first task is to make the formal estimate of the residual $R_\ep$ from \cite{IP} rigorous. Here is the result:

\begin{proposition} \label{Res est 1}
If $u(X,\tau)$ is a solution of \eqref{BO} 
with $\sup_{|\tau| \le \tau_0} \|u(\cdot,\tau)\|_{H^6} < \infty$ then there exists $C>0$ and $\ep_0>0$ for which $\ep \in (0,\ep_0]$ implies
\bes\label{Res est 2}
\sup_{|t| \le \tau_0/\ep^{\al}} \| R_\ep(\cdot, t) \|_{\ell^2} \le C \ep^{\beta_\alpha}
\ees
where
$$
\beta_\alpha := \begin{cases} 
3\al-5/2, & \alpha \in (1,2] \\
\al+3/2, & \alpha \in (2,3).
\end{cases}
$$
\end{proposition}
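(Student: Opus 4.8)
The plan is to substitute the ansatz \eqref{ansatz} directly into the residual $R_\ep$ and split it into a part that cancels identically because $u=-\partial_X v$ solves \eqref{BO}, and a genuinely higher-order remainder that I estimate in $\ell^2$. First I would compute $\ddot{\tilde x}_j$ exactly by the chain rule: writing $X=\ep(j-c_\al t)$ and $\tau=\ep^\al t$, one obtains the three exact terms $\ddot{\tilde x}_j = c_\al^2\ep^\al\partial_X^2 v - 2c_\al\ep^{2\al-1}\partial_X\partial_\tau v + \ep^{3\al-2}\partial_\tau^2 v$. For the force term I set $\delta^+_{j,m}:=\ep^{\al-2}[v(X+\ep m,\tau)-v(X,\tau)]$ and $\delta^-_{j,m}:=\ep^{\al-2}[v(X,\tau)-v(X-\ep m,\tau)]$, so that $\tilde x_{j+m}-\tilde x_j=m+\delta^+_{j,m}$ and $\tilde x_j-\tilde x_{j-m}=m+\delta^-_{j,m}$. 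The essential preliminary is the \emph{uniform} bound $|\delta^\pm_{j,m}/m|\le C\ep^{\al-1}$, proved by splitting into $\ep m\le 1$ (using $|v(X\pm\ep m)-v(X)|\le \ep m\|\partial_X v\|_\infty$) and $\ep m> 1$ (using $|v(X\pm\ep m)-v(X)|\le 2\|v\|_\infty$ together with $1/m<\ep$). This smallness both guarantees the particles do not collide and justifies Taylor expanding $(1+\delta^\pm/m)^{-(\al+1)}$ to finite order with an integral-form remainder.

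Next I would extract the cancellation. Expanding to cubic order, the linear ($k=1$) contribution is $-\al(\al+1)\ep^{\al-2}\sum_{m\ge1}m^{-(\al+2)}[v(X+\ep m)-2v(X)+v(X-\ep m)]$; passing to Fourier this equals $4\al(\al+1)\ep^{\al-2}\int\hat v(k)e^{ikX}g(\ep k)\,dk$ with $g(\xi):=\sum_{m\ge1}\sin^2(\xi m/2)/m^{\al+2}$. The small-argument expansion of this Clausen-type series reads $g(\xi)=\tfrac{\zeta_\al}{4}\xi^2-\tfrac{\kappa_3}{4\al(\al+1)}|\xi|^{\al+1}+O(\xi^4)$, where identifying the non-analytic coefficient with the integral in \eqref{constants} is exactly the Ingimarson--Pego computation. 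The quadratic ($k=2$) contribution similarly yields $\kappa_2\ep^{2\al-1}\partial_X v\,\partial_X^2 v$ at leading order, with $\kappa_2=\al(\al+1)(\al+2)\zeta_\al$. Combining with $\ddot{\tilde x}_j$: the $\ep^\al$ terms cancel because $c_\al^2=\al(\al+1)\zeta_\al$, and the full $\ep^{2\al-1}$ part equals $\kappa_1\partial_\tau u+\kappa_2 u\partial_X u+\kappa_3 H|D|^\al u$ evaluated at $(X,\tau)$, which vanishes by \eqref{BO} (here $\kappa_1=2c_\al$).

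It then remains to bound the leftover, and this is where the two cases in $\beta_\al$ emerge. The surviving terms are: (i) $\ep^{3\al-2}\partial_\tau^2 v$ from $\ddot{\tilde x}_j$; (ii) the first analytic correction to $g$, contributing $O(\ep^{\al+2})$ pointwise (a multiple of $|D|^4 v$, needing $v\in H^4$); (iii) the subleading corrections of the quadratic sum; and (iv) the cubic and higher Taylor terms, where the near-equality $\delta^+\approx\delta^-$ forces $(\delta^+)^k-(\delta^-)^k$ to carry a second-difference factor and hence be $O(\ep^{3\al-2})$ pointwise rather than the crude $O(\ep^{3\al-3})$. I would convert each pointwise/$L^2$ bound into an $\ell^2_j$ bound using the long-wave sampling estimate (Lemma 4.3 of \cite{GMWZ}), which costs a factor $\ep^{-1/2}$ and requires $v\in H^6$ to beat the high-frequency ($|k|\gtrsim\ep^{-1}$) tail after splitting the $k$-integral. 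Comparing exponents then gives the dichotomy: for $\al\in(1,2]$ the slowest-decaying contributions are those of pointwise order $\ep^{3\al-2}$ (terms (i) and (iv), with (iii) no larger), giving $\ell^2$-order $\ep^{3\al-5/2}$; for $\al\in(2,3)$ the correction (ii) is slowest, at $\ep^{\al+3/2}$ — precisely $\beta_\al$.

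The main obstacle is the uniform control of the Fourier-multiplier error $g(\ep k)-\big[\tfrac{\zeta_\al}{4}(\ep k)^2-\tfrac{\kappa_3}{4\al(\al+1)}|\ep k|^{\al+1}\big]$ \emph{simultaneously across all frequencies} and in tandem with the sum over $j$: the non-analytic $|\ep k|^{\al+1}$ term must be isolated so that its coefficient matches $\kappa_3$ exactly, its remainder bounded uniformly for small $\ep k$, and the region $|k|\gtrsim\ep^{-1}$ (where the expansion is invalid and one has only $|g|\le\zeta_{\al+2}$) absorbed by Sobolev decay. The accompanying bookkeeping — checking that every quadratic and cubic correction is subdominant to the claimed $\beta_\al$ in each range of $\al$ — is routine but delicate, and is the source of the worst-case $H^6$ requirement.
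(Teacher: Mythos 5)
Your plan is, at its core, the same as the paper's: substitute the ansatz, Taylor-expand the force in powers of the relative displacement, identify the linear part as a Fourier multiplier whose small-argument expansion produces the $\zeta_\al\xi^2$ term and the non-analytic $|\xi|^{\al+1}$ term, cancel the $O(\ep^{\al})$ and $O(\ep^{2\al-1})$ parts using \eqref{BO}, exploit the second-difference structure to push the nonlinear remainders to $O(\ep^{3\al-2})$, and pay a factor $\ep^{-1/2}$ to sample onto $\ell^2$. Your bookkeeping of which term dominates in each range of $\al$ is correct, and your multiplier $g$ is a repackaging of the paper's central object: one has
$g(\xi)=\tfrac{\zeta_\al}{4}\xi^2-\tfrac14|\xi|^{\al+1}\eta_\al(|\xi|)$,
where $\eta_\al(h)$ is the right-endpoint rectangle-rule approximation of the integral $\eta_\al=\kappa_3/\al(\al+1)$.

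However, there is a genuine gap, and you named it yourself: the ``small-argument expansion of this Clausen-type series,'' with an error that is uniformly $O(\xi^4)$, is precisely the hard part of the proposition, and your proposal asserts it rather than proves it. In the paper this is Lemma \ref{eta lemma}: a quantitative error bound for the rectangle-rule approximation of the improper integral $\eta_\al=\int_0^\infty(1-\sinc^2(s/2))s^{-\al}\,ds$, proved by splitting the sum near and far from $s=0$, desingularizing the integrand by subtracting $s^{2-\al}/12$, and estimating the singular piece by hand; it is the source of the exponent dichotomy and occupies most of the paper's proof, the rest being the routine bookkeeping you describe. Note also that your claimed error $O(\xi^4)$ is \emph{stronger} than what the paper establishes for $\al\in(1,2]$ --- there the paper only proves $|\eta_\al(h)-\eta_\al|\le Ch$, i.e.\ an error $O(|\xi|^{\al+2})$ in $g$ --- and a proof of your version would require an Euler--Maclaurin or polylogarithm expansion with fractional exponents (true, but not a fact one can simply cite; the weaker bound suffices anyway, since for $\al\le 2$ the $\ep^{3\al-2}$ terms dominate). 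One smaller flaw: your preliminary bound in the regime $\ep m>1$ invokes $\|v\|_{L^\infty}$, which need not be finite ($v(X,\tau)=-\int_0^X u(b,\tau)\,db$ with $u\in H^6$ can grow in $X$); this is harmless, because the Lipschitz estimate $|v(X\pm\ep m,\tau)-v(X,\tau)|\le \ep m\|u\|_{L^\infty}\le C\ep m\|u\|_{H^1}$ holds for \emph{all} $m$, making your case split unnecessary --- which is in effect the paper's route via the averaging operators of Lemma \ref{A est}.
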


\begin{proof} 
The proof is technical and we break it up into several parts: an analysis of the acceleration
term, another for the force terms and then a final section where we put everything together.

{\bf Part 1: the acceleration term.}
By the chain rule
$
\ddot{\tilde{x}}_j(t) =a_\ep(\ep(j-c_\al t),\ep^\al t)
$
where
$$
a_\ep(X,\tau) :=  -\ep^{\al} c_\al^2 \partial_X u(X,\tau) +  \ep^{2 \al-1} \kappa_1 \partial_\tau u(X,\tau) + \ep^{3 \al-2} \partial_\tau^2 v(X,\tau).
$$
Using \eqref{BO} we replace $\partial_\tau u$ to get:
$$
a_\ep= -\ep^{\al} c_\al^2 \partial_X u -  \ep^{2 \al-1} \kappa_2 u \partial_X u -  \ep^{2 \al-1}  \kappa_3 H|D|^\al u+ \ep^{3 \al-2} \partial_\tau^2 v.
$$
Differentiating \eqref{BO} with respect to $\tau$ and the relation $u = -\partial_X v$
imply\footnote{For concreteness, we note that we compute $v$ from $u$ via $\ds v(X,\tau) = -\int_0^X u(b,\tau)db$.}
$$
\partial_\tau^2 v = -{\kappa_2 \over \kappa_1}  u \partial_\tau u
+{\kappa_3 \over \kappa_1} |D|^{\al-1} \partial_\tau u.
$$
The Sobolev inequality and counting derivatives give 
$$
\|\partial_\tau^2 v\|_{H^1} \le C \|u\|_{H^1}\|\partial_\tau u\|_{H^1}
+ C \|\partial_\tau u\|_{H^{ \al}}.
$$
Taking the $H^s$ norm of both sides of \eqref{BO} tells us that $\| \partial_\tau u\|_{H^s} \le C\left(\|u\|^2_{H^{s+1}} + \| u\|_{H^{s+\al}}\right)$. In turn this gives
$$
\|\partial_\tau^2 v\|_{H^1} \le C \|u\|_{H^1}\left(\|u\|^2_{H^{2}} + \| u\|_{H^{1+\al}}\right)
+ C \left(\|u\|^2_{H^{\al+1}} + \| u\|_{H^{2\al}}\right).
$$
Since $\al < 3$ and we have assumed a uniform bound on $u \in H^6$ for $|\tau| \le \tau_0$ we can conclude
\be\label{final a est}
\sup_{|\tau| \le \tau_0} \| a_\ep+\ep^{\al} c_\al^2 \partial_X u +  \ep^{2 \al-1} \kappa_2 u \partial_X u +  \ep^{2 \al-1}  \kappa_3 H|D|^\al u\|_{H^1} \le C\ep^{3 \al -2}. 
\ee

{\bf Part 2: the force term.}
%We begin with an analysis of the summation term in $R_\ep$.
The authors of \cite{IP} show that 
$$
\tilde{x}_{j+m} - \tilde{x}_j = m - m\ep^{\al-1} A_{\ep m} u (X,\tau)
\mand \tilde{x}_{j} - \tilde{x}_{j-m} = m - m\ep^{\al-1} A_{-\ep m} u (X,\tau)
$$
where
$$
A_h u(X,\tau) := {1 \over h} \int_0^h u(X+z,\tau) dz.
$$
If we let
\bes\label{this is V}
V_m(g):={1 \over (m + g)^{\al}} - {1 \over m^\al} + {\alpha \over m^{\al+1}} g
\ees
so that
$$
V_m'(g) = -{\al \over (m+g)^{\al+1}} + {\al \over m^{\al+1}},
$$
then the force terms in $R_\ep$ can be rewritten as
\bes\begin{split}
\alpha\sum_{m \ge 1} \left[ {1 \over (\tilde{x}_{j+m} - \tilde{x}_j)^{\alpha+1}} -{1 \over (\tilde{x}_{j} + \tilde{x}_{j-m})^{\alpha+1}} 
 \right]
 =F_\ep(\ep(j-c_\al t),\ep^{\al} t)
 \end{split}\ees
 where
 $$
F_\ep(X,\tau):=-\sum_{m \ge 1} \left[ V'_m(- m\ep^{\al-1} A_{\ep m} u(X,\tau)) - V'_m(- m\ep^{\al-1} A_{-\ep m} u(X,\tau)) \right].
$$

A combination of the the fundamental theorem of calculus and Taylor's theorem implies
$$
V_m'(g_+) - V'_m(g_-) = 
V_m''(0)(g_+ - g_-) + {1 \over 2}V_m'''(0) (g_+^2 - g_-^2) + \int_{g_-}^{g_+} E_m(\sigma) d\sigma
$$
where
$$
E_m(\sigma) := \int_0^\sigma V_m''''(\phi)(\sigma-\phi) d\phi.
$$
This leads to the  expansion
\be\label{F decomp}\begin{split}
F_\ep = {\alpha (\alpha+1)}L_\ep +{\alpha (\alpha+1)(\alpha+2) \over 2} N_\ep +M_\ep
\end{split}\ee
where
\bes\begin{split}
L_\ep&:= \ep^{\al-1} \sum_{m \ge 1}  {1 \over m^{\alpha+1}} \left(A_{\ep m} - A_{-\ep m}\right) u\\
N_\ep&:= \ep^{2\al-2} \sum_{m \ge 1}  {1 \over m^{\alpha+1}} \left((A_{\ep m}u)^2 - (A_{-\ep m}u)^2\right) \\
M_\ep&:=-\sum_{m \ge 1} \int_{-m \ep^{\al-1} A_{-\ep m} u}^{-m \ep^{\al-1} A_{\ep m} u} E_m(\sigma) d\sigma.
\end{split}\ees
The terms $L_\ep$ and $N_\ep$ coincide with their forms in \cite{IP}. 
The remaining term, $M_\ep$, is lumped into a generic $o(\ep^{2\al+1})$ term there.

{\it Part 2a: Estimates for $L_\ep$.}
We follow the blueprint provided by \cite{IP}.
Using the fact that $\hat{A_h u}(k)  = \hat{u}(k){(e^{ikh}-1)}/ ikh$, they show that
\bes\begin{split}
\hat{L}_\ep(k) &= \ep^{\al}\zeta_\alpha  ik \hat{u}(k) + \ep^{2\al-1} ik |k|^{\alpha-1} \hat{u}(k) \left( |k| \ep \sum_{m \ge 1}{\sinc^2(|k|\ep m/2)-1\over (|k| \ep m)^{\alpha}}\right).
\end{split}\ees
Let
$$
\eta_\alpha( h) := h \sum_{m \ge 1}{1-\sinc^2(h m/2)\over (h m)^{\alpha}}.
$$
A key observation from \cite{IP} is that $\eta_\al(h)$ is the approximation of 
$$
\eta_\al:=\int_0^\infty {1 -\sinc^2(s/2) \over s^\al} ds
$$
using the rectangular rule with right hand endpoints. As such
$$
\lim_{h \to 0} \eta_\al(h) =\eta_\al.
$$
Note that $\eta_\al$ is finite so long as $\al \in (1,3)$.

Then we have
\be\label{L expando}
\hat{L}_\ep(k) = 
\ep^{\al}\zeta_\alpha  ik \hat{u}(k) - \ep^{2\al-1} \eta_\al ik |k|^{\alpha-1} \hat{u}(k) 
+\ep^{2\al-1}  ik |k|^{\alpha-1} \hat{u}(k) 
\left(\eta_\al- \eta_\al(\ep|k|)\right).
\ee
What is the error made by approximating $\eta_\al(\ep|k|)$ by $\eta_\al$? 
To determine this we 
need:
\begin{lemma}\label{eta lemma}
For $\al \in (1,2]$ there exists 
 $C>0$  for which
$|\eta_\al(h) - \eta_\al| \le C h$
for all $h > 0$. If $\al \in (2,3)$ there exists $C > 0$ for which 
$|\eta_\al(h) - \eta_\al| \le C h^{3-\al}$ for all $h > 0$.
\end{lemma}

\begin{proof} 
If the integral were not improper, this would be an elementary estimate. But it is.
In fact when $\al \in (2,3)$ it is improper at $s=0$
and that is why the estimate is worse in that setting. Also, when $\al \in (1,2)$ the derivative of 
the integrand
$$f_\al(s) :=  {1 -\sinc^2(s/2) \over s^\al}$$
diverges as $s \to 0^+$, which complicates things.

First we deal with $h \ge 1$. We have
$$
|\eta_\al(h) - \eta_\al|  \le \eta_\al + h \sum_{m \ge 1} {1 - \sinc^2(hm/2) \over (h m)^\al}.
$$
Since $\sinc^2(s) \in [0,1]$ for all $s\in \R$ we make an easy estimate
$$
|\eta_\al(h) - \eta_\al|  \le \eta_\al + h \sum_{m \ge 1} {1 \over (h m)^\al} = \eta_\al+ h^{1-\al}\zeta_\al 
= \left(\eta_\al + h^{-\al} \zeta_\al\right)h  \le (\eta_\al + \zeta_\al) h \le C h.
$$
So $h \ge 1$ is taken care of for all $\al \in (1,3)$.

Now fix $h \in (0,1)$. 
We break things up:
\bes\begin{split}
\eta_\al(h) - \eta_\al =  & \bunderbrace{h \sum_{m = 1}^{\lceil 1/h \rceil}f_\al(mh)- \int_0^{h \lceil 1/h \rceil} f_\al(s) ds}{IN}
+ \bunderbrace{h \sum_{m \ge  \lceil 1/h \rceil+1}f_\al(mh)- \int^\infty_{h \lceil 1/h \rceil} f_\al(s) ds}{OUT}.
\end{split}\ees

For $OUT$,  by standard integral identities and the integral version of the mean value theorem
we have
$$
OUT = \sum_{m \ge  \lceil 1/h \rceil+1} \left( hf_\al(mh) - \int_{(m-1)h}^{mh} f_\al(s) ds\right)
= h \sum_{m \ge  \lceil 1/h \rceil+1} \left(f_\al(mh) - f_\al(s_m)\right).
$$
Here $s_m \in [(m-1)h,mh]$. Then we use the derivative version of the mean value theorem to get
$$
OUT =  h \sum_{m \ge  \lceil 1/h \rceil+1} f'_\al(\sigma_m)(mh-s_m)
$$
where $\sigma_m \in [s_m,m h]$. Note that $|mh-s_m|\le h$. 

Routine calculations show that there is a constant $C>0$ such that 
$|f'_\al(s)| \le C s^{-\al-1}$ for $s \ge 1$. Since $s^{-\al-1}$ is a decreasing function these considerations lead to
$$
|OUT| \le C h^2 \sum_{m \ge  \lceil 1/h \rceil+1}  [(m-1) h]^{-\al-1} =C h^2 \sum_{m \ge  \lceil 1/h \rceil}  [m h]^{-\al-1}.
$$

Next, 
$\ds h \sum_{m \ge  \lceil 1/h \rceil}  [m h]^{-\al-1}$
is the approximation of $\ds \int^\infty_{h\lceil 1/h\rceil-h} s^{-\alpha-1} ds$ using the rectangular rule with right hand endpoints. Since $s^{-\al-1}$ is decreasing we know that $\ds h \sum_{m \ge  \lceil 1/h \rceil}  [m h]^{-\al-1} \le \int^\infty_{h\lceil 1/h\rceil-h} s^{-\alpha-1}ds$.
Also since $h \in (0,1)$ we have $h\lceil 1/h\rceil-h \ge 1/2$ and so 
$\ds \int^\infty_{h\lceil 1/h\rceil-h} s^{-\alpha-1} \le\int^\infty_{1/2} s^{-\alpha-1} = 2^\al/\al$. Putting these together imply $|OUT| \le Ch$.

For $IN$ we need to desingularize the integrand at $s = 0$. Putting
$$
f_\al(s) = \bunderbrace{{1-(s^2/12) -\sinc^2(s/2) \over s^\al}}{g_\al(s)} + {1 \over 12} s^{2 - \al}
$$
gives
$$
IN = \left(h \sum_{m = 1}^{\lceil 1/h \rceil}g_\al(mh)- \int_0^{h \lceil 1/h \rceil} g_\al(s) ds\right)
+{1 \over 12} \left(
h \sum_{m = 1}^{\lceil 1/h \rceil}(mh)^{2-\al} - \int_0^{h \lceil 1/h \rceil} s^{2-\al} ds\right).
$$
Taylor's theorem tells us that 
$|1-(s^2/12) -\sinc^2(s/2)|/s^4$ is bounded as $s \to 0$ and as a byproduct we see that $g_\al(s)$ is $C^1$ on 
the interval $[0,2]$.
Routine error estimates for approximating integrals with rectangles tells us 
 $$
 \left|h \sum_{m = 1}^{\lceil 1/h \rceil}g_\al(mh)- \int_0^{h \lceil 1/h \rceil} g_\al(s) ds\right| \le Ch.
$$

So what remains is to estimate the singular piece
$$
SING_\al = \left |h \sum_{m = 1}^{\lceil 1/h \rceil}(mh)^{2-\al} - \int_0^{h \lceil 1/h \rceil} s^{2-\al} ds\right|.
$$
Note that if $\al  = 2$ then $SING_\al = 0$, so that case is pretty easy. But the cases $\al \in (1,2)$ and $\al \in (2,3)$
require some care. 

We know that 
$\ds h \sum_{m =1}^{\lceil 1/h \rceil} {(mh)^{2-\al}}$ is the  rectangular approximation of $\ds \int_0^{h\lceil 1/h \rceil}  {s^{2-\al}} ds$ using right hand endpoints but it is also the rectangular approximation of $\ds \int_0^{h\lceil 1/h \rceil}  {(s+h)^{2-\al}} ds$ using left hand endpoints. 
If $\alpha \in (2,3)$ then $s^{2-\alpha}$ is a decreasing function and we get the following chain of inequalities:
\bes\label{big chain}
\int_0^{h\lceil 1/h \rceil}  {(s+h)^{2-\al}} ds \le h \sum_{m =1}^{\lceil 1/h \rceil} (mh)^{2-\al} \le \int_0^{h\lceil 1/h \rceil}  {s^{2-\al}} ds.
\ees
On the other hand if $\al \in (1,2)$ then $s^{2-\alpha}$ is increasing and we have
\bes\label{small chain}
\int_0^{h\lceil 1/h \rceil}  {(s+h)^{2-\al}} ds \ge h \sum_{m =1}^{\lceil 1/h \rceil} (mh)^{2-\al} \ge \int_0^{h\lceil 1/h \rceil}  {s^{2-\al}} ds.
\ees
Either of the chains tells us:
\bes\begin{split}
SING_\al &\le \left \vert
\int_0^{h\lceil 1/h \rceil}  \left(s^{2-\al} - {(s+h)^{2-\al}} \right)ds
\right \vert\\&={1 \over 3-\al}
\left| (h\lceil 1/h \rceil)^{3-\al}  -
(h\lceil 1/h \rceil+h)^{3-\al}  + h^{3-\al}\right|\\
&\le {1 \over 3-\al}
\left| (h\lceil 1/h \rceil)^{3-\al}  -
(h\lceil 1/h \rceil+h)^{3-\al}   \right| + {1 \over 3-\al} h^{3-\al}.
\end{split}\ees
The mean value theorem gives
$${1 \over 3-\al}
\left| (h\lceil 1/h \rceil)^{3-\al}  -
(h\lceil 1/h \rceil+h)^{3-\al}   \right|= {h h_*^{2-\al}}
$$
where $h_*$ is in between $h\lceil 1/h \rceil$ and $h\lceil 1/h \rceil+h$. These numbers are in the interval $[1,3]$ and so we have
$${1 \over 3-\al}
\left| (h\lceil 1/h \rceil)^{3-\al}  -
(h\lceil 1/h \rceil+h)^{3-\al}   \right| \le C h.
$$
So $|SING_\al| \le Ch + Ch^{3-\al}$.

Everything all together tells us that $h \in (0,1)$ and $\al \in (1,3)$ imply
$
|\eta_\al(h) - \eta_\al| \le Ch + C h^{3-\al}.
$
If $\al \in (1,2]$ then $h \le  h^{3-\al} $ and the inequality flips for $\al \in (2,3)$. That finishes the proof.

 \end{proof}

With Lemma \ref{eta lemma}, \eqref{L expando} implies
$$
\left \vert \hat{L}_\ep(k) - \ep^{\al}\zeta_\alpha  ik \hat{u}(k) + \ep^{2\al-1}\eta_\al ik |k|^{\alpha-1} \hat{u}(k)\right\vert
\le C \ep^{2\al-1+r_\al}|k|^{\al+r_\al} |\hat{u}(k)| 
$$
where $$
r_\al:=\begin{cases} 1, & \al \in (1,2]\\ 3-\al, & \al \in (2,3).
\end{cases}
$$ 
This in turn implies (along with the assumed uniform estimate for $u$) that
\be\label{L est}
\sup_{|\tau| \le \tau_0} 
\| L_\ep - \ep^{\al} \zeta_\alpha \partial_X u -\ep^{2\al-1} \eta_\al H |D|^{\al}u \|_{H^1} \le C \ep^{2\al-1+r_\al}.
\ee

{\it Part 2b: Estimates for $N_\ep$.} Some easy algebra leads to
\bes\begin{split}
N_\ep = & 2\ep^{2\al-2} u \sum_{m \ge 1} {1 \over m^{\al +1}}
(A_{\ep m} u - A_{-\ep m}u)\\
+&\ep^{2\al-2} \sum_{m\ge1} {1 \over m^{\al +1}}
(A_{\ep m} u + A_{-\ep m} u-2u)(A_{\ep m} u - A_{-\ep m}u).
\end{split}\ees
We recognize that $L_\ep$ is lurking in the first term and get
\bes\begin{split}
N_\ep = & 2\ep^{\al-1} u L_\ep
+\ep^{2\al-2} \sum_{m\ge1} {1 \over m^{\al +1}}
(A_{\ep m} u + A_{-\ep m} u-2u)(A_{\ep m} u - A_{-\ep m}u).
\end{split}\ees
Then we subtract:
\bes\begin{split}
N_\ep -2\ep^{2\al-1} \zeta_\al u \partial_X u =& 2 \ep^{\al-1} u \left(L_\ep - \ep^{\al} \zeta_\al \partial_X u\right)\\
+&\ep^{2\al-2} \sum_{m\ge1} {1 \over m^{\al +1}}
(A_{\ep m} u + A_{-\ep m} u-2u)(A_{\ep m} u - A_{-\ep m}u).
\end{split}\ees
We take the $H^1$ norm and use triangle and Sobolev:
\be\begin{split}\label{gack}
&\| N_\ep -2\ep^{2\al-1} \zeta_\al u \partial_X u\|_{H^1}\\ \le & 2 \ep^{\al-1} \|u\|_{H^1} \|L_\ep - \ep^{\al} \zeta_\al \partial_X u\|_{H^1}\\
+&\ep^{2\al-2} \sum_{m\ge1} {1 \over m^{\al +1}}
\|A_{\ep m} u + A_{-\ep m} u-2u\|_{H^1}\|A_{\ep m} u - A_{-\ep m}u\|_{H^1}.
\end{split}\ee
The estimate \eqref{L est} tells us that $2 \ep^{\al-1} \|u\|_{H^1} \|L_\ep - \ep^{\al} \zeta_\al \partial_X u\|_{H^1}
\le C\ep^{3\al-2} \|u\|^2_{1+\al + r_\al}$.

To control the remaining term in \eqref{gack} we will use the following estimates (see \cite{HML} for the proof):
\begin{lemma}\label{A est} There is $C>0$ such that for all $h>0$ and $s \in \R$:
\bes\begin{split}
\|A_h u\|_{H^s} &\le C \|u\|_{H^s}\\
\|A_{h} u + A_{-h} u-2u\|_{H^s}& \le C h^2 \|u\|_{H^{s+2}}\\
\|A_{h} u - A_{-h} u\|_{H^s} &\le C h \|u\|_{H^{s+1}}\\
\|A_{h} u -  u\|_{H^s} &\le C h \|u\|_{H^{s+1}}.
\end{split}\ees
\end{lemma}
We have to deal with terms like $A_{\ep m}$ and so the above result will be helpful
when $\ep m$ is ``small'' but not very useful otherwise. So we break things up:
\bes\label{drr}\begin{split}
&\ep^{2\al-2} \sum_{m\ge1} {1 \over m^{\al +1}}
\|A_{\ep m} u + A_{-\ep m} u-2u\|_{H^1}\|A_{\ep m} u - A_{-\ep m}u\|_{H^1}\\
=&\ep^{2\al-2} \sum_{m=1}^{\lfloor 1/\ep \rfloor} {1 \over m^{\al +1}}
\|A_{\ep m} u + A_{-\ep m} u-2u\|_{H^1}\|A_{\ep m} u - A_{-\ep m}u\|_{H^1}\\
+&\ep^{2\al-2} \sum_{m\ge \lfloor 1/\ep \rfloor} {1 \over m^{\al +1}}
\|A_{\ep m} u + A_{-\ep m} u-2u\|_{H^1}\|A_{\ep m} u - A_{-\ep m}u\|_{H^1}\\
=& I+II.
\end{split}\ees
Applying the second and third estimates from Lemma \ref{A est} gives
\bes\begin{split}
I
\le &C\ep^{2\al+1}  \|u\|^2_{H^{3}}\sum_{m=1}^{\lfloor 1/\ep \rfloor} {m^{2-\al}}.
\end{split}\ees
A classic ``integral comparison'' tells us that
$\ds
\sum_{m=1}^{\lfloor 1/\ep \rfloor} {m^{2-\al}} \le C \ep^{\al-3}.$
So then
$
I\le C \ep^{3\al-2}  \|u\|^2_{H^{3}}.
$

For $II$ we use the first estimate in Lemma \ref{A est} to get 
\bes\begin{split}
II
\le &C \ep^{2\al-2} \|u\|^2_{H^{1}}\sum_{m>\lfloor 1/\ep \rfloor} {1 \over m^{\al +1}}.
\end{split}\ees
Then another integral type estimate tells us $\ds
\sum_{m>\lfloor 1/\ep \rfloor} {1 \over m^{\al +1}} \le C \ep^{\al}.
$
So that $II \le C \ep^{3 \al-2}\|u\|_{H^1}^2$. 
Therefore we have our final estimate for $N_\ep$:
\be\label{final N est}\begin{split}
\sup_{|\tau| \le \tau_0} \| N_\ep -2\ep^{2\al-1} \zeta_\al u \partial_X u\|_{H^1} \le 
C \ep^{3\al-2}.
\end{split}\ee

{\it Part 2c: Estimates for $M_\ep$.} We need to treat $\|M_\ep\|_{L^2}$ and $\| \partial_X M_\ep\|_{L^2}$ separately and we start with the former. An routine estimate shows
$$
|M_\ep| \le \sum_{m \ge 1} m \ep^{\al-1}|(A_{\ep m} - A_{-\ep m})u| \sup_{\sigma \in I_m} 
|E_m(\sigma)|
$$
where $I_m$ is the interval between $-m \ep^{\al-1} A_{\ep m} u$ and $-m \ep^{\al-1} A_{-\ep m} u$. 

If we assume $\sigma>0$ then
\bes\begin{split}
|E_m(\sigma)| 
\le & \int_0^\sigma |V_m''''(\phi)| |\sigma-\phi| d \phi 
\le  \sup_{0\le \phi\le \sigma }|V_m''''(\phi)|  \int_0^\sigma |\sigma-\phi| d \phi 
=  {1 \over 2} \sup_{0\le \phi\le \sigma }|V_m''''(\phi)| \sigma^2.
\end{split}\ees
$|V_m''''(\phi)|$ is decreasing and so $\sup_{0\le \phi\le \sigma }|V_m''''(\phi)| = |V_m''''(0)| = C/m^{\al+4}$.
So in this case
$$
|E_m(\sigma)| \le {C \sigma^2\over m^{\al+4}} .
$$
Similarly if $\sigma \le 0$:
\bes\begin{split}
|E_m(\sigma)| 
\le  \int_\sigma^0 |V_m''''(\phi)| |\sigma-\phi| d \phi 
\le  {1 \over 2} \sup_{0\le \phi\le \sigma }|V_m''''(\phi)| \sigma^2
\le  {C \sigma^2\over (m+\sigma)^{\al+4}} .
\end{split}\ees
In either case we had:
\be\label{Em est}
|E_m(\sigma)| \le {C \sigma^2\over (m-|\sigma|)^{\al+4}} .
\ee
Note that the constant $C$ here is independent of $m$.

We have, using Lemma \ref{A est} and Sobolev:
\be\label{sig est 1}
\sigma \in I_m \implies |\sigma| \le C m \ep^{\al-1}\|u\|_{H^1}.
\ee
In particular by ensuring that $\ep$ is not so large we have
\be\label{sig est 2}
\sigma \in I_m \implies |\sigma| \le m/2.
\ee
So  \eqref{Em est}, \eqref{sig est 1} and \eqref{sig est 2} give
\be\label{Em est 2}
\sup_{\sigma\in I_m} |E_m(\sigma)|\le \left({C m^2 \ep^{2\al-2} \|u\|_{H^1}^2\over (m-m/2)^{\alpha+4}}\right) \le {C \ep^{2\al-2} \over m^{\al +2}} \|u\|_{H^1}^2.
\ee
In turn this gives
$$
|M_\ep| \le C\ep^{3\al-3} \|u\|_{H^1}^2\sum_{m \ge 1} {1 \over m^{\al +1}}|(A_{\ep m} - A_{-\ep m})u|. 
$$
Then Lemma \ref{A est} leads us to:
\bes\label{M L2}\begin{split}
\|M_\ep\|_{L^2} &
\le C\ep^{3\al-3} \|u\|_{H^1}^2\sum_{m \ge 1} {1 \over m^{\al +1}}\|(A_{\ep m} - A_{-\ep m})u\|_{L^2} \\
&\le C\ep^{3\al-2} \|u\|_{H^1}^3\sum_{m \ge 1} {1 \over m^{\al}}\\
&\le C\zeta_\al\ep^{3\al-2} \|u\|_{H^1}^3.
\end{split}\ees

Next we compute using the fundamental theorem and some algebra:
$$
\partial_X M_\ep = \ep^{\al-1}
\sum_{m \ge 1} m \left[
E_m(-m \ep^{\al-1} A_{\ep m} u) A_{\ep m} \partial_X u 
-E_m(-m \ep^{\al-1} A_{-\ep m} u)   A_{-\ep m} \partial_X u \right].
$$
Adding zero takes us to
\bes\begin{split}
\partial_X M_\ep &= \ep^{\al-1}
\sum_{m \ge 1} m 
E_m(-m \ep^{\al-1} A_{\ep m} u) \left(A_{\ep m} 
-  A_{-\ep m}  \right)\partial_X u\\
&+
 \ep^{\al-1}
\sum_{m \ge 1} m \left(
E_m(-m \ep^{\al-1} A_{\ep m} u)-
E_m(-m \ep^{\al-1} A_{-\ep m} u)\right)   A_{-\ep m} \partial_X u \\
&=III+IV.
\end{split}\ees
Using \eqref{Em est} and the same reasoning that lead to \eqref{sig est 2} yields
\bes\begin{split}
|III|
\le &C \ep^{\al-1} 
\sum_{m \ge 1} m {\left( m \ep^{\al-1} A_{\ep m} u  \right)^2 \over m^{\al+4}}
\left| (A_{\ep m}-A_{\ep m}) \partial_Xu\right|\\
\le &C \ep^{3\al -3}
\sum_{m \ge 1} {1 \over m^{\alpha+1}}
 |A_{\ep m} u|^2 \left| (A_{\ep m}-A_{\ep m}) \partial_Xu\right|.
\end{split}\ees
Sobolev and the first estimate in Lemma \ref{A est} imply 
$ |A_{\ep m} u(X)| \le C \|u\|_{H^1}$ and so
\bes\begin{split}
|III|
\le C \ep^{3\al -3} \|u\|_{H^1}^2 
\sum_{m \ge 1} {1 \over m^{\alpha+1}}
\left| (A_{\ep m}-A_{\ep m}) \partial_Xu\right|.
\end{split}\ees
We take the $L^2$-norm of the above, use the second estimate in Lemma \ref{A est} and do the resulting sum to obtain
\bes\begin{split}
\|III\|_{L^2}
\le C \ep^{3\al -3} \|u\|_{H^1}^2 
\sum_{m \ge 1} {1 \over m^{\alpha+1}} \ep m \|\partial_Xu\|_{H^1}
\le C \ep^{3\al-2} \|u\|^{3}_{H^2}.
\end{split}\ees

For $IV$, routine estimates and the mean value theorem give
$$
|IV|  \le \ep^{2\al-2}
\sum_{m\ge1} m^2 |A_{-\ep m} \partial_X u| 
|(A_{\ep m}-A_{-\ep m}) u| \sup_{\sigma \in I_m} |E'_m(\sigma)|
$$
where $I_m$ is consistent with its definition above. Reasoning analogous to that which led 
\eqref{Em est} can be used to show that 
$|E'_m(\sigma)|\le C|\sigma|/(m-|\sigma|)^{\al + 4}.$ And then \eqref{sig est 1} and \eqref{sig est 2} imply
$$
\sup_{\sigma \in I_m} |E_m'(\sigma)| \le {C \ep^{\al-1} \over m^{\al + 3}} \|u\|_{H^1}.
$$
So 
$$
|IV| \le C \ep^{3\al -3} \|u\|_{H^1} \sum_{m \ge 1} {1 \over 
 m^{\al +1}} |A_{-\ep m} \partial_X u| 
|(A_{\ep m}-A_{-\ep m}) u|. 
$$

Using Sobolev and the estimates in Lemma \ref{A est} in ways we have done above
give
$
\|IV\|_{L^2} \le  C \ep^{3\al -2} \|u\|_{H^1}^3. 
$
This, in conjunction with the above estimates for $\|III\|_{L^2}$ and $\|M_\ep\|_{L^2}$ result in
\be\label{final M est}
\sup_{|\tau| \le \tau_0} \| M_\ep \|_{H^1} \le C \ep^{3 \al -2}.
\ee

{\bf Part 3: finishing touches.} Using the decomposition of $F_\ep$ from \eqref{F decomp}
along with the definitions of $c_\al$, $\kappa_1$, $\kappa_2$ and $\kappa_3$ we obtain
after some routine triangle inequality estimates
\bes\begin{split}
\|a_\ep + F_\ep\|_{H^1}  = &
\left\|a_\ep + \al(\al+1) L_\ep + {\al(\al+1)(\al+2)\over 2} N_\ep + M_\ep \right\|_{H^1} \\
\le &\left\|a_\ep +\ep^{\al} c_\al^2 \partial_X u +  \ep^{2 \al-1} \kappa_2 u \partial_X u +  \ep^{2 \al-1}  \kappa_3 H|D|^\al u\right\|_{H^1}\\
+& \al(\al+1)\left\| L_\ep -\ep^{\al} \zeta_\al \partial_X u -\ep^{2 \al-1}  \eta_\al H|D|^\al u\right\|_{H^1}\\
+ &{\al(\al+1)(\al+2)\over 2}  \left\|N_\ep- 2 \ep^{2 \al-1}\zeta_\al  u \partial_X u \right\|_{H^1}\\
+&\left\| M_\ep \right\|_{H^1}.
\end{split}\ees
So we use \eqref{final a est}, \eqref{L est}, \eqref{final N est} and \eqref{final M est} to get
$$
\sup_{|\tau| \le \tau_0} \|a_\ep + F_\ep\|_{H^1} \le
C \left( \ep^{2\al - 1 + r_\al}+\ep^{3 \al -2} \right).
$$
For $\al \in (1,2]$ we have $\ep^{2 \al -1 + r_\al} = \ep^{2 \al} \le \ep^{3 \al - 2}$
and so
\bes\label{rest1}\al \in (1,2] \implies
\sup_{|\tau| \le \tau_0} \|a_\ep + F_\ep\|_{H^1} \le
C \ep^{3 \al -2}.
\ees
For $\al \in (2,3)$ we have $\ep^{2 \al -1 + r_\al} = \ep^{\al+2} \ge \ep^{3 \al - 2}$ so
\bes\label{rest2}\al \in (1,2] \implies
\sup_{|\tau| \le \tau_0} \|a_\ep + F_\ep\|_{H^1} \le
C \ep^{\al+2}.
\ees
By design, we have $R_\ep(j,t) = a_\ep(\ep(j-c_\al t),\ep^\al t) + F_\ep (\ep(j-c_\al t),\ep^\al t)$.
 Estimate (4.8) from Lemma 4.3 in \cite{GMWZ} states that if 
\be\label{lw sample}
 G(X) \in H^1 \implies  \|G(\ep \cdot)\|_{\ell^2} \le C \ep^{-1/2} \|G\|_{H^1}.
\ee 
Thus \eqref{rest1} and \eqref{rest2} give us
\bes\label{Res est 3}
\sup_{|t| \le \tau_0/\ep^{\al}} \| R_\ep(\ep(\cdot - t),\ep^{\al} t) \|_{\ell^2} \le C \ep^{\beta_\alpha}
\ees
with $\beta_\al$ as in the statement of the proposition. That does it.
\end{proof}

\section{A general approximation theorem}\label{gen section}

The previous section provides a rigorous bound on the size of the residual $R_\ep$, but this is only part of the approximation theory. We need to demonstrate that a true solution $x_j(t)$ is shadowed by the approximate solution $\tilde{x}_j(t)$ in an appropriate sense. The argument (which is a direct descendent of the validation of KdV as the long-wave limit for FPUT lattices in \cite{SW}) is based on ``energy estimates''. This estimates are much more transparent after a change of coordinates. After the recoordinatization, we prove a conservation law which will imply global in time existence of solutions. Then we prove a general approximation result.

\subsection{Relative displacement/velocity coordinates.}
Let
$$
r_j:= x_{j+1} - x_j-1 \mand p_j:=\dot{x}_j.
$$
These are referred to as the relative displacement and velocity. 
Note that if $r_j = p_j=0$ then the system is in the equilibrium configuration $x_j = j$.
A calculation shows that
%
%
%
%First note that when
% $x_j = k j$, where $k > 0$ is arbitrary, the solution of \eqref{CM} is constant. We change coordinates relative to the ``$k =1$ equilibrium'', $x_j = j$, by putting $x_j = j + y_j$ to get
%\be\label{CM2}
%\ddot{y}_j= %-\alpha \sum_{m = 1}^\infty \left[ \left( m + y_{j+m}-y_j\right)^{-\alpha-1} - \left(m + y_j-y_{j-m} \right)^{-\alpha-1}\right]
%-\alpha \sum_{m \ge 1} \left[  {1 \over \left(m + y_{j+m}-y_j\right)^{\alpha+1}} - {1 \over \left(m + y_j-y_{j-m}  \right)^{\alpha+1}}\right].
%\ee
%
%
%Next put $r_j:=y_{j+1} - y_j$. Then
$$
x_{j+m} - x_j =m+ \G_m r_j
\mand 
x_j - x_{j-m} =m+\G_{-m} r_j
$$
where
$$
\G_m r_j:=\sum_{l = 0}^{m-1} r_{j+l}  \mand \G_{-m} r_j:=\sum_{l=0}^{m-1} r_{k-m-l}.
$$
Also define operators $S^k$, $\delta^\pm_m$ via:
$$
S^k f_j := f_{j+k},\quad \delta^+_m f_j:=f_{j+m}-f_j\mand\delta^-_m f_j := f_j-f_{j-m}.
$$
Here are a few useful formulas that are not too hard to confirm:
$$
\G_m \delta_1^+ = \delta_m^+ \mand S^{-m} \G_m r_j = \G_m r_{j-m} = \G_{-m} r_j.
$$

The above considerations allow us to reformulate \eqref{CM} as a first order system in terms of $r_j$ and $p_j$:
\be\label{CM3}\begin{split}
\dot{r}_j&= \delta^+_1 p_j  \mand
%\dot{p}_j&=-\alpha \sum_{m = 1}^\infty \left[ \left( m + (\G_m r)_j\right)^{-\alpha-1} - \left(m + (\G_{-m} r)_j \right)^{-\alpha-1}\right]\\
\dot{p}_j= \sum_{m \ge 1} \delta^-_m V_m'(\G_m r)_j
\end{split}\ee
where $V_m$ coincides with its definition in the previous section.
%$$
%V_m'(g):= -{\alpha \over (m+g)^{\alpha+1}}.
%$$

\subsection{Energy conservation}
Let
$$
\E:=K+P
$$ 
where
$$
K:={1 \over 2} \sum_{j \in \Z}p_j^2
\mand
P:=
\sum_{j\in \Z} \sum_{m \ge 1} V_m(\G_m r)_j.
$$
This quantity corresponds to the total mechanical energy of the system. It is constant and here is the extremely classical argument.

Differentiation of $\E$ with respect to $t$:
\bes\begin{split}
\dot{\E} = \sum_{j \in \Z} \left( p_j \dot{p}_j + \sum_{m \ge 1} V_m'(\G_m r)_j \G_m \dot{r}_j \right).
\end{split}\ees
Eliminate $\dot{p}$ and $\dot{r}$ on the right using \eqref{CM3}:
\bes\begin{split}
\dot{\E} = \sum_{j \in \Z} \left( p_j  \sum_{m \ge 1} \delta^-_m V_m'(\G_m r)_j + \sum_{m \ge 1} V_m'(\G_m r)_j \G_m \delta^+_1 p_j \right).
\end{split}\ees
Rearrange the sums:
\bes\begin{split}
\dot{\E} =  \sum_{m \ge 1}\sum_{j \in \Z} \left( p_j  \delta^-_m V_m'(\G_m r)_j + V_m'(\G_m r)_j \G_m \delta^+_1 p_j \right).
\end{split}\ees
Use $\G_m \delta_1^+ = \delta_m^+$ in the second term and the summation by parts formula in the first:
\bes\begin{split}
\dot{\E} =  \sum_{m \ge 1}\sum_{j \in \Z} \left(  -\delta^+_m p_j V_m'(\G_m r)_j + V_m'(\G_m r)_j \delta^+_m p_j \right)=0.
\end{split}\ees
So $\E$ is constant.

\subsection{Norm equivalence and global existence}
In certain circumstances, the (square root of  the) energy $E$ from the previous section is equivalent to the $\ell^2 \times \ell^2$ norm. It is trivial that $K$ is equivalent to $\| p \|_{\ell^2}^2$ but the part involving $P$ is not obvious. Here is the result.
\begin{lemma}\label{P equivalence}
Suppose that $\alpha >1$ such that $2\zeta_{\alpha+1} - \zeta_\alpha >0$. Then there exists $\rho>0$ and a constant $C >1$ such that 
$$
\| r \|_{\ell^2} \le \rho \implies C^{-1} \|r\|_{\ell^2} \le \sqrt{P} \le C \|r\|_{\ell^2}.
$$
\end{lemma}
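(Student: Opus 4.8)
The plan is to compare $P$ to its quadratic part and estimate the deviation. Since $V_m(0)=V_m'(0)=0$, $V_m''(0)=\al(\al+1)/m^{\al+2}$, and $V_m$ is strictly convex on $(-m,\infty)$ (indeed $V_m''(g)=\al(\al+1)/(m+g)^{\al+2}>0$), I would introduce the quadratic form
\bes
Q := {\al(\al+1)\over 2}\sum_{j\in\Z}\sum_{m\ge1}{1\over m^{\al+2}}(\G_m r)_j^2
\ees
and try to trap $P$ between constant multiples of $Q$, while showing $Q$ is itself equivalent to $\|r\|_{\ell^2}^2$. The smallness hypothesis enters through the elementary bound $|(\G_m r)_j|\le \sqrt m\,\|r\|_{\ell^2}\le \sqrt m\,\rho$ (Cauchy--Schwarz on the $m$-term sum defining $\G_m$); choosing $\rho\le 1/2$ forces $m+(\G_m r)_j\ge m/2$, which keeps every argument uniformly away from the singularity of $V_m$ at $-m$ and makes $|V_m^{(n)}|\le C_n m^{-(\al+n)}$ on the relevant interval for each $n\ge2$.

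For the deviation and the upper bound I would use Taylor's theorem with cubic remainder: $|V_m(g)-\tfrac12 V_m''(0)g^2|\le \tfrac16\sup|V_m'''|\,|g|^3\le C m^{-(\al+3)}|g|^3$. Writing $|g|^3\le \sqrt m\,\rho\,g^2$ with $g=(\G_m r)_j$, and using the crude but here-harmless bound $\sum_j(\G_m r)_j^2\le m^2\|r\|_{\ell^2}^2$, the resulting $m$-sum $\sum_m m^{-(\al+1/2)}$ converges for $\al>1$ and yields $|P-Q|\le C\rho\,\|r\|_{\ell^2}^2$. The same ingredients give the upper bound at once: $Q\le \tfrac{\al(\al+1)}2\sum_m m^{-\al}\|r\|_{\ell^2}^2=\tfrac{\al(\al+1)}2\zeta_\al\|r\|_{\ell^2}^2$, so $P\le Q+|P-Q|\le C\|r\|_{\ell^2}^2$, with no restriction on $\al$.

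\textbf{The main obstacle is the lower bound on $Q$}, and this is exactly where $2\zeta_{\al+1}-\zeta_\al>0$ should surface. I would expand the square using the correlations $\langle r,S^k r\rangle:=\sum_j r_j r_{j+k}$:
\bes
\sum_j(\G_m r)_j^2=m\,\|r\|_{\ell^2}^2+2\sum_{k=1}^{m-1}(m-k)\langle r,S^k r\rangle .
\ees
Bounding each correlation crudely by Cauchy--Schwarz, $\langle r,S^k r\rangle\ge-\|r\|_{\ell^2}^2$, and using $\sum_{k=1}^{m-1}(m-k)=\tfrac12 m(m-1)$, gives the termwise estimate $\sum_j(\G_m r)_j^2\ge m(2-m)\|r\|_{\ell^2}^2$. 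Since the weights $\al(\al+1)/(2m^{\al+2})$ are positive and both series converge for $\al>1$, summing this termwise inequality produces
\bes
Q\ge {\al(\al+1)\over 2}\Big(\sum_{m\ge1}{2-m\over m^{\al+1}}\Big)\|r\|_{\ell^2}^2={\al(\al+1)\over 2}\big(2\zeta_{\al+1}-\zeta_\al\big)\|r\|_{\ell^2}^2 .
\ees
Under the hypothesis the coefficient is a positive constant $c_0=c_0(\al)$; combining with the deviation estimate gives $P\ge (c_0-C\rho)\|r\|_{\ell^2}^2$, and I would finish by shrinking $\rho$ (after $c_0$ is fixed) so that $C\rho\le c_0/2$, yielding $P\ge \tfrac12 c_0\|r\|_{\ell^2}^2$.

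I expect the genuinely delicate point to be precisely this lower bound: the crude correlation estimate $\langle r,S^k r\rangle\ge-\|r\|_{\ell^2}^2$ is what forces the sign condition on $2\zeta_{\al+1}-\zeta_\al$, so coercivity of $Q$ is not automatic from this route even though every $V_m$ is nonnegative. One could instead diagonalize $Q$ by the discrete Fourier transform, where the symbol $\tfrac{\al(\al+1)}{2}\sum_m m^{-(\al+2)}\sin^2(m\theta/2)/\sin^2(\theta/2)$ is manifestly positive and bounded on $[-\pi,\pi]$, removing the condition entirely; but the direct correlation argument is the most transparent, and is what makes $2\zeta_{\al+1}-\zeta_\al$ arise naturally, matching the hypothesis of the lemma.
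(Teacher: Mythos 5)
Your proposal is correct and takes essentially the same route as the paper: the paper likewise compares $P$ to the quadratic form $\sum_{j}\sum_{m} m^{-(\alpha+2)}(\G_m r)_j^2$ and proves its coercivity by exactly your diagonal/off-diagonal split with Cauchy--Schwarz (the off-diagonal count $m(m-1)/2$ producing the constant $2\zeta_{\alpha+1}-\zeta_\alpha$). The only differences are cosmetic: the paper derives the lemma as the $b=0$ case of the two-variable estimate in Proposition \ref{W prop} for $W_m(a,b)$, and it traps $V_m$ multiplicatively between constant multiples of the quadratic term via the Lagrange remainder (so its $\rho=1/4$ need not shrink as $2\zeta_{\alpha+1}-\zeta_\alpha\to 0^+$), rather than using your additive cubic-remainder bound $|P-Q|\le C\rho\|r\|_{\ell^2}^2$.
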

The proof of this will be a consequence of Proposition \ref{W prop}, below. For now, note that it implies that small initial data for \eqref{CM3} implies global existence of solutions. Specifically:
\begin{cor} Fix $\alpha>1$ such that $2\zeta_{\alpha+1} - \zeta_\alpha >0$. Then there exists $\rho,C>0$
such that if $\|\bar{r},\bar{p}\|_{\ell^2 \times \ell^2} \le \rho$ then there exists $(r_j(t),p_j(t)) \in C^1(\R;\ell^2 \times \ell^2)$ which solves \eqref{CM3} and for which $(r_j(0),p_j(0)) = (\bar{r}_j,\bar{p}_j)$. Moreover
$\sup_{t \in\R} \|r(t),p(t)\|_{\ell^2 \times \ell^2} \le C \|\bar{r},\bar{p}\|_{\ell^2 \times \ell^2}$.
\end{cor}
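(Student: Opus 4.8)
The plan is to view \eqref{CM3} as an autonomous ODE on the Banach space $\ell^2 \times \ell^2$ and to combine Picard--Lindel\"of local existence with conservation of the energy $\E$ to bootstrap a global, uniformly bounded solution. The linear part $p \mapsto \delta_1^+ p$ of the vector field is bounded on $\ell^2$, so the only genuine issue for local well-posedness is that the force term $r \mapsto \sum_{m \ge 1} \delta_m^- V_m'(\G_m r)$ be locally Lipschitz from the ball $\{\|r\|_{\ell^2} < \rho\}$ into $\ell^2$. Granting this (it is the analytic content packaged in Proposition \ref{W prop}; see the obstacle below), Picard--Lindel\"of produces a unique maximal solution $(r(t),p(t)) \in C^1((-T_{\max},T_{\max}); \ell^2 \times \ell^2)$ obeying the usual blow-up alternative: if $T_{\max}<\infty$ then the solution must exit the region on which the vector field is Lipschitz as $t \to T_{\max}^-$, i.e.\ either $\|(r,p)\|_{\ell^2\times\ell^2}\to\infty$ or $\|r\|_{\ell^2}\to\rho$.

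Next I would convert constancy of $\E = K + P$ into an a priori bound. On the ball $\|r\|_{\ell^2}\le\rho$, Lemma \ref{P equivalence} gives $C^{-1}\|r\|_{\ell^2}\le\sqrt P\le C\|r\|_{\ell^2}$, while always $K=\tfrac12\|p\|_{\ell^2}^2$; hence $\E$ is comparable to $\|(r,p)\|_{\ell^2\times\ell^2}^2$ there. For initial data with $\|(\bar{r},\bar{p})\|_{\ell^2\times\ell^2}\le\rho'$, constancy of $\E$ yields, for as long as $\|r(t)\|_{\ell^2}\le\rho$,
\begin{equation*}
\|r(t)\|_{\ell^2}^2 \le C^2 P(t) \le C^2 \E(t) = C^2\E(0) \le C_1(\rho')^2 \mand \|p(t)\|_{\ell^2}^2 = 2K(t) \le 2\E(0) \le C_1(\rho')^2,
\end{equation*}
where $C_1$ depends only on $C$ and we have also used $\E(0)\le C_1(\rho')^2$, itself a consequence of the energy equivalence at $t=0$.

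A continuity argument then keeps the solution inside the good ball. Fixing $\rho'$ small enough that $C_1(\rho')^2<\rho^2$, I claim $\|r(t)\|_{\ell^2}<\rho$ throughout the existence interval: it holds at $t=0$ since $\|\bar{r}\|_{\ell^2}\le\rho'<\rho$, and if $\rho$ were first attained at a finite $t^*$, the displayed estimate (valid on $[0,t^*]$ by continuity) would force $\|r(t^*)\|_{\ell^2}^2\le C_1(\rho')^2<\rho^2$, a contradiction. Thus $\|r(t)\|_{\ell^2}$ stays strictly below $\rho$ and $\|(r,p)\|_{\ell^2\times\ell^2}$ stays bounded, so the blow-up alternative forces $T_{\max}=\infty$. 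The same display yields $\sup_{t\in\R}\|(r(t),p(t))\|_{\ell^2\times\ell^2}^2\le(C^2+2)\E(0)\le C_2\|(\bar{r},\bar{p})\|_{\ell^2\times\ell^2}^2$, which is the asserted uniform bound.

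I expect the main obstacle to be the local Lipschitz estimate for the force term. Because $\G_m$ accumulates up to $m$ entries of $r$, controlling $\sum_{m\ge1}\delta_m^- V_m'(\G_m r)$ in $\ell^2$ requires exploiting the $O(m^{-(\al+1)})$ decay of $V_m'$ near the origin together with a uniform bound on its relevant difference quotients on the range dictated by $\|r\|_{\ell^2}\le\rho$ --- precisely the kind of summation and desingularization already used in the residual estimates of Section \ref{res est} and, in organized form, in Proposition \ref{W prop}. Once that map is shown to be locally Lipschitz, everything else is the textbook energy-conservation continuation argument.
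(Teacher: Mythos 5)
Your proposal is correct and is essentially the argument the paper has in mind: the paper omits the proof as ``classical,'' deferring to Theorem 5.2 of \cite{GMWZ}, which is exactly the scheme you describe --- local existence from the locally Lipschitz vector field (the Lipschitz bound being supplied by \eqref{W' est} of Proposition \ref{W prop}), conservation of $\E$, the norm equivalence of Lemma \ref{P equivalence}, and a continuation/bootstrap argument. No substantive difference to report.
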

We omit the proof as it is classical. In any case, it is nearly identical to the proof of Theorem 5.2 of \cite{GMWZ}.

\begin{remark} It is non-obvious that the condition $2\zeta_{\alpha+1} -\zeta_\alpha>0$ is met. In
Figure \ref{zetafig} we plot $2\zeta_{\alpha+1} -\zeta_\alpha$ vs $\al$. One sees that 
there exists a root of $2\zeta_{\alpha+1} -\zeta_\alpha$,
 denoted $\al_*$, in the interval $(1.4,1.5)$, such that
 $2\zeta_{\alpha+1} -\zeta_\alpha>0$ for $\al > \al_*$ and is non-positive otherwise. 
Thus the condition is non-vacuous.
 \begin{figure}[t]\centering
%\begin{subfigure}[b]{0.70\textwidth}
\includegraphics[width=.75\textwidth]{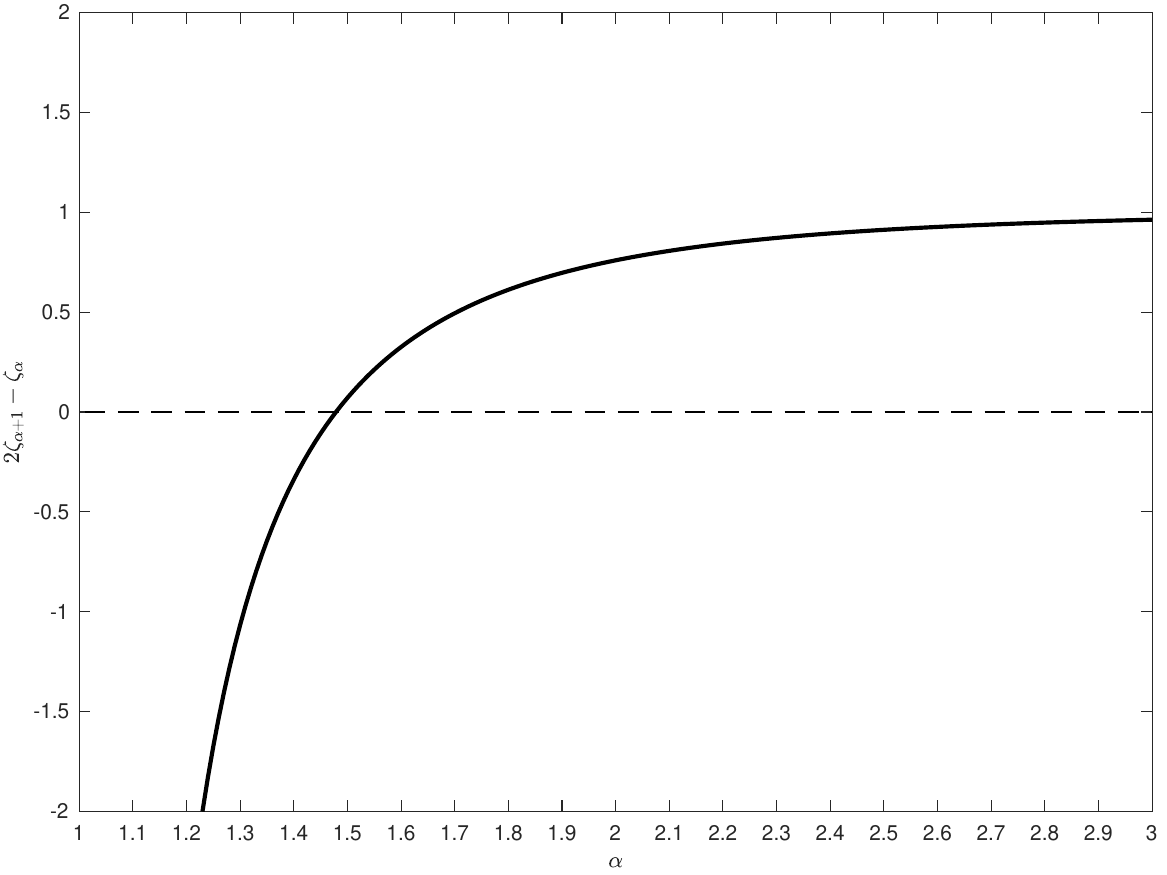}
%\caption{}
%\end{subfigure}
\caption{$2\zeta_{\alpha+1} -\zeta_\alpha$ vs $\alpha$}\label{zetafig}
\end{figure}
\end{remark}
%
%
%
%
%\section{Error estimates and the main result}
%In this section we prove the rigorous error estimates. We first prove a some general approximation theorem 
%and then apply it to the approximation of \eqref{CM} (really its alternate form \eqref{CM3}) by \eqref{BO}.
\subsection{Approximation in general}
Let 
$$
r_j(t) = \tilde{r}_j(t) + \eta_j(t) \mand p_j(t) = \tilde{p}_j(t) + \xi_j(t).
$$
where $\tilde{r}_j(t)$ and $\tilde{p}_j(t)$ are some given functions which we expect are good approximators
to true solutions $r_j(t)$ and $p_j(t)$ of \eqref{CM3}. Then the ``errors'' $\eta_j(t)$ and $\xi_j(t)$ solve
\be\label{err eqn}
\dot{\eta}_j = \delta_1^+ \xi_j + \Res_1 \mand \dot{\xi}_j = \sum_{m \ge 1} \delta_m^- \left[V_m'\left( \G_m \left(\tilde{r} +\eta\right)\right) 
-V_m'\left( \G_m\tilde{r}\right) \right]_j+ \Res_2
\ee
where
\be\label{residuals}
\Res_1 = \delta_1^+ \tilde{p}_j - \dot{\tilde{r}}_j
\mand \Res_2 = \sum_{m \ge 1} \delta_m^- V_m'\left( \G_m\tilde{r}\right)_j - \dot{\tilde{p}}_j.
\ee
The functions $\Res_1$ and $\Res_2$ are, like $R_\ep$, residuals and quantify the amount by which the approximators $\tilde{r}_j(t)$ and $\tilde{p}_j(t)$ fail to satisfy \eqref{CM3}. Ultimately these will be expressed in terms of $R_\ep$, but for now we leave things general.

Our goal in this section is to show that $\eta_j(t)$ and $\xi_j(t)$ remain small (in $\ell^2$) over long time periods, provided they are initially small. In particular we prove:
\begin{theorem}\label{gen approx}
Suppose that $\al>1$ with $2 \zeta_{\al +1} - \zeta_\al >0$. Assume further that for some $\tau_0,C_1, \ep_1>0$,
$\ep \in (0,\ep_1]$ implies
\be\label{res hyp}
\sup_{|t| \le \tau_0/ \ep^{\al}} \left(\|\Res_1\|_{\ell^2}  + \|\Res_2\|_{\ell^2}\right) \le C_1 \ep^{\beta},
\quad
\sup_{|t| \le \tau_0 /\ep^{\al}}\| \dot{\tilde{r}}\|_{\ell^\infty} \le C_1 \ep^{\al}
\ee
and
$$
\|\bar{\eta},\bar{\xi}\|_{\ell^2 \times \ell^2} \le C_1 \ep^{\beta-\al}.
$$
Then there exists constants $C_*, \ep_*>0$ so that the following holds for $\ep \in (0,\ep_*]$.
If $\eta_j(t)$, $\xi_j(t)$ solve \eqref{err eqn} with initial data $\bar{\eta}_j$, $\bar{\xi}_j$ we have
$$
\sup_{|t| \le \tau_0/\ep^{\al}} \| \eta(t),\xi(t)\|_{\ell^2 \times \ell^2} \le C_* \ep^{\beta - \al}.
$$

\end{theorem}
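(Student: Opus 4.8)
The plan is to run an energy estimate for the error system \eqref{err eqn}, organized around the approximate state $\tilde{r}$ in the same way the computation $\dot{\E}=0$ was organized around the true state. The natural Lyapunov functional is the \emph{relative energy}
\bes
\H(t) := {1 \over 2} \sum_{j \in \Z} \xi_j^2 + \sum_{j \in \Z} \sum_{m \ge 1} \LB V_m(\G_m(\tilde r + \eta)) - V_m(\G_m \tilde r) - V_m'(\G_m \tilde r)\, \G_m \eta \RB_j,
\ees
namely the kinetic energy of the velocity error plus the second-order Taylor remainder of the potential along $\eta$. The first step is \emph{coercivity}: as long as $\|\tilde r\|_{\ell^\infty}$ and $\|\eta\|_{\ell^2}$ are small, Taylor's theorem rewrites the bracket as ${1 \over 2}V_m''(\theta_{m,j})(\G_m\eta_j)^2$ with $\theta_{m,j}$ trapped near $0$, so that $V_m''(\theta_{m,j}) \asymp V_m''(0) = \al(\al+1)m^{-\al-2}>0$. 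The resulting quadratic form is comparable to the Hessian of $P$ at the origin, and its positivity, hence $C^{-1}\|\eta\|_{\ell^2}^2 \le \H - {1 \over 2}\|\xi\|_{\ell^2}^2 \le C\|\eta\|_{\ell^2}^2$, is exactly the content of Lemma \ref{P equivalence} and is where the hypothesis $2\zeta_{\al+1}-\zeta_\al>0$ is used. Consequently $\sqrt{\H} \asymp \|\eta,\xi\|_{\ell^2\times\ell^2}$ throughout the small-data regime.

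Next I would differentiate $\H$ along \eqref{err eqn}. Abbreviating $a := \G_m(\tilde r+\eta)_j$ and $b := \G_m\tilde r_j$, the kinetic part contributes $\sum_j \xi_j \sum_m \delta_m^-[V_m'(a)-V_m'(b)]_j + \sum_j\xi_j\Res_2$, while the potential part, after using $\G_m\dot\eta = \G_m(\delta_1^+\xi+\Res_1) = \delta_m^+\xi + \G_m\Res_1$ and summation by parts, contributes $-\sum_j\xi_j\sum_m\delta_m^-[V_m'(a)-V_m'(b)]_j$ together with the $\G_m\Res_1$ and $\G_m\dot{\tilde r}$ pieces. The two interaction sums cancel exactly — this is the entire point of the choice of $\H$ — leaving
\bes\begin{split}
\dot\H = \sum_{j} \xi_j \Res_2 &+ \sum_{m\ge1}\sum_{j} [V_m'(a)-V_m'(b)]\, \G_m \Res_1 \\
&+ \sum_{m\ge1}\sum_{j} [V_m'(a)-V_m'(b)-V_m''(b)\G_m\eta]\,\G_m\dot{\tilde r}.
\end{split}\ees

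Finally I would bound the three surviving terms. The first is at most $\|\Res_2\|_{\ell^2}\sqrt{2\H}$. For the second, the mean value theorem gives $|V_m'(a)-V_m'(b)| \le Cm^{-\al-2}|\G_m\eta|$, and combining Cauchy--Schwarz in $j$ with the bound $\|\G_m g\|_{\ell^2}\le m\|g\|_{\ell^2}$ produces $\le C\zeta_\al\|\eta\|_{\ell^2}\|\Res_1\|_{\ell^2}$, the sum over $m$ converging since $\al>1$. For the third, Taylor's theorem bounds the bracket by $Cm^{-\al-3}(\G_m\eta)^2$; using $|\G_m\dot{\tilde r}_j|\le m\|\dot{\tilde r}\|_{\ell^\infty}$ and the hypothesis $\|\dot{\tilde r}\|_{\ell^\infty}\le C_1\ep^\al$ gives $\le C\ep^\al\|\eta\|_{\ell^2}^2 \le C\ep^\al\H$. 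With \eqref{res hyp} this yields the differential inequality
\bes
|\dot\H| \le C\ep^{\beta}\sqrt{\H} + C\ep^{\al}\H, \quad\text{equivalently}\quad {d \over dt}\sqrt{\H} \le C\ep^{\beta} + C\ep^{\al}\sqrt{\H}.
\ees
Gronwall over $|t|\le\tau_0/\ep^\al$ generates only a harmless factor $e^{C\tau_0}$ on this time scale, giving $\sqrt{\H(t)} \le e^{C\tau_0}\left(\sqrt{\H(0)}+C\ep^{\beta-\al}\right)$; since the initial-data hypothesis and coercivity give $\sqrt{\H(0)}\le C\ep^{\beta-\al}$, coercivity then converts this into the claimed bound on $\|\eta,\xi\|_{\ell^2\times\ell^2}$. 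I would close with a standard continuation argument: the coercivity and the $V_m''$, $V_m'''$ estimates are valid only while $\|\eta\|_{\ell^2}$ stays below the smallness threshold $\rho$, so one runs the estimate on the maximal interval where $\|\eta\|\le\rho$ and checks that the resulting bound $C_*\ep^{\beta-\al}<\rho$ for $\ep$ small, ruling out escape before $t=\tau_0/\ep^\al$. I expect the main obstacle to be the coercivity step: controlling the infinite-range sum $\sum_{m\ge1}$ uniformly and extracting the lower bound $\H\gtrsim\|\eta\|_{\ell^2}^2$ is precisely where $2\zeta_{\al+1}-\zeta_\al>0$ is forced, and this positivity must survive moving the base point from $0$ to the small but nonzero $\tilde r$.
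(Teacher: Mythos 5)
Your proposal is correct and follows essentially the same route as the paper: your relative energy $\H$ is exactly the paper's $\H$ (the paper just names the Taylor remainder $W_m(a,b)=V_m(b+a)-V_m(b)-V_m'(b)a$ and collects its properties in Proposition \ref{W prop}), and the summation-by-parts cancellation, the three surviving terms with bounds $C\|\Res_2\|_{\ell^2}\|\xi\|_{\ell^2}$, $C\|\Res_1\|_{\ell^2}\|\eta\|_{\ell^2}$, $C\ep^\al\|\eta\|_{\ell^2}^2$, and the Gr\"onwall step all coincide with the paper's argument, with the coercivity resting on $2\zeta_{\al+1}-\zeta_\al>0$ in precisely the same way. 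Your explicit continuation/bootstrap argument at the end is a point of care the paper leaves implicit, but it does not constitute a different approach.
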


\begin{proof}

We begin by rewriting \eqref{err eqn} in a helpful way.
For $a,b \in \R$ put
$$
W_m(a,b) := V_m(b+a) -V_m(b)- V_m'(b)a 
$$
and let
$$
W_m'(a,b) := \partial_a W_m(a,b) = V'_m(b+a) - V'_m(b).
$$
With this \eqref{err eqn} becomes
\be\label{err eqn 2}
\dot{\eta}_j = \delta_1^+ \xi_j + \Res_1 \mand \dot{\xi}_j = \sum_{m \ge 1} \delta_m^- W_m'(\G_m \eta,\G_m \tilde{r}) _j+ \Res_2.
\ee
The point of introducing $W_m$ here is that now \eqref{err eqn 2} is structurally similar to \eqref{CM3} with $V_m$ replaced by $W_m$. Then we hope we can recapture some of the glory of conservation of $\E$ from above, but for the error equations. 

So for a solution of \eqref{err eqn 2} put:
$$
\H:={1 \over 2}\sum_{j \in \Z}  \xi_j^2 + \sum_{j \in \Z} \sum_{m\ge1} W_m(\G_m \eta,\G_m \tilde{r})_j.
$$
This is our replacement for $\E$. The following proposition contains the key properties of the second term in $\H$, chief of which that under some conditions it is equivalent to  $\|\eta\|_{\ell^2}^2$.
\begin{proposition} \label{W prop}
Fix $\al > 1$ with $2 \zeta_{\al+1} - \zeta_\al >0$. Then there exists $C>1$ such that the following hold when $\|\tilde{r}\|_{\ell^2} \le 1/4$ and $\|\eta\|_{\ell^2} \le 1/4$:
\be\label{W eq 1}
C^{-1} \|\eta\|_{\ell^2} \le \sqrt{ \sum_{j \in \Z} \sum_{m\ge1} W_m(\G_m \eta,\G_m \tilde{r})_j
} \le C  \|\eta\|_{\ell^2},
\ee
\be\label{W' est}
\sum_{m \ge 1} m \| W_m'(\G_m \eta,\G_m \tilde{r})\|_{\ell^2} \le C  \|\eta\|_{\ell^2},
\ee
\be\label{Wb est}
\sum_{m \ge 1} m \| \partial_b W_m(\G_m \eta,\G_m \tilde{r})\|_{\ell^1} \le C  \|\eta\|^2_{\ell^2}.
\ee
\end{proposition}
\begin{remark} Note that $W_m(a,0) = V_m(a)$ and so 
if $\tilde{r}_j(t)$ is identically zero then \eqref{W eq 1} coincides exactly with
the estimate in Lemma \ref{P equivalence}, with $\eta$ swapped with $r$.
\end{remark}

\begin{proof}
We begin with \eqref{W eq 1}. 
Taylor's theorem tells us that
$
W_m(a,b) = {\alpha (\alpha+1)  a^2/2 (m+b_*)^{\alpha+2}}
$
with $b_*$ in between $b$ and $b+a$. This leads to
\be\label{Wab eq}
{\alpha (\alpha+1) \over 2 (m+|a|+|b|)^{\alpha+2}} a^2\le W_m(a,b) \le {\alpha (\alpha+1) \over 2 (m-|a|
-|b|)^{\alpha+2}} a^2.
\ee

%So we have
%$$
%{\alpha (\alpha+1)  \over 
%2(m + |\G_m \eta_j| + |\G_m \tilde{r}_j|)^{\al+2} } (\G_m \eta_j)^2 \le 
%W_m(\G_m \eta,\G_m \tilde{r})_j \le 
%{
%\alpha (\alpha+1)  \over 
%2(m - |\G_m \eta_j| - |\G_m \tilde{r}_j|)^{\al+2} } (\G_m \eta_j)^2.
%$$
We have assumed $\|\eta\|_{\ell^2} \le 1/4$. 
Thus the classical estimate
 $\|f\|_{\ell^\infty} \le \|f\|_{\ell^2}$ along with the triangle inequality
 tell us
 $$
 |\G_m \eta_j| \le  \|\G_m \eta\|_{\ell^\infty} \le \sum_{l =0}^{m-1} \|\eta_{\cdot+l}\|_{\ell^\infty} = m \| \eta\|_{\ell^\infty}
 \le m/4.
 $$
 Similarly $\|\tilde{r}\|_{\ell^2} \le 1/4$ implies $ |\G_m \tilde{r}_j| \le m/4$. So we have
$ m - |\G_m \eta_j| - |\G_m \tilde{r}_j| \ge m/2$ 
and
$ m + |\G_m \eta_j| + |\G_m \tilde{r}_j| \ge 3m/2$ 
and thus \eqref{Wab eq} gives
 $$
 {2^{\alpha+1} \alpha (\alpha+1) \over 3^{\alpha+2} m^{\alpha+2}}(\G_m\eta_j)^2 \le W_m(\G_m\eta,\G_m\tilde{r})_j \le {2^{\alpha+1} \alpha (\alpha+1) \over  m^{\alpha+2}} (\G_m\eta_j)^2.
$$
Summing this gets us to
\bes\label{W eq 2}
 {2^{\alpha+1} \alpha (\alpha+1) \over 3^{\alpha+2} }P_2 \le 
 \sum_{j \in \Z} \sum_{m\ge1} W_m(\G_m \eta,\G_m \tilde{r})_j \le 
 2^{\alpha+1} \alpha (\alpha+1) P_2
\ees
where
$$
P_2:=\sum_{j \in \Z} \sum_{m \ge 1} {1 \over  m^{\alpha+2}} (\G_m \eta)_j ^2.
$$

Using the definition of $\G_m$ and multiplying out the square gives
%$$
%P_2=\alpha(\alpha+1) \sum_{j \in \Z} \sum_{m \ge 1} {1 \over 2 m^{\alpha+2}} \left(\sum_{l=0}^{m-1} r_{j+l}\right) ^2.
%$$
%Then we multiply out the square:
$$
P_2=\sum_{j \in \Z} \sum_{m \ge 1} {1 \over  m^{\alpha+2}} \left(\sum_{l=0}^{m-1} \eta^2_{j+l} + 2 \sum_{0\le l <k \le m-1} \eta_{j+l} \eta_{j+k}\right).
$$
Rearranging sums and doing some computations on the ``diagonal part'' of the above gets
\bes\begin{split}
\sum_{j \in \Z} \sum_{m \ge 1} {1 \over  m^{\alpha+2}} \sum_{l=0}^{m-1} \eta^2_{j+l}
 =&\sum_{m \ge 1} {1 \over  m^{\alpha+2}} \sum_{l=0}^{m-1} \sum_{j \in \Z}\eta^2_{j+l}
  =\sum_{m\ge1} {1 \over  m^{\alpha+2}}\sum_{l=0}^{m-1} \| \eta\|_{\ell^2}^2
 %=\sum_{m\ge1} {1 \over 2 m^{\alpha+1}} \| r\|_{\ell^2}^2 
  = \zeta_{\alpha+1} \|\eta\|^2_{\ell^2}.
 \end{split}\ees
Therefore
 %$$
% P_2 = \zeta_{\alpha+1} \|r\|^2_{\ell^2} + 
%2 \sum_{j \in \Z} \sum_{m \ge 1} {1 \over  m^{\alpha+2}} \sum_{0\le l <k \le m-1} r_{j+l} r_{j+k}.
% $$
 $$
 P_2-  \zeta_{\alpha+1} \|\eta\|^2_{\ell^2} =2\sum_{j \in \Z} \sum_{m \ge 1} {1 \over  m^{\alpha+2}} \sum_{0\le l <k \le m-1} \eta_{j+l} \eta_{j+k}=:P_{22}.
$$
%Note that the innermost (double) sum is empty when $m =1$ so we have:
 %$$
 %P_{22}:=\sum_{j \in \Z} \sum_{m \ge 2} {1 \over  m^{\alpha+2}} \sum_{0\le l <k \le m-1} r_{j+l} r_{j+k}
%$$
Rearranging sums gives
$$
 P_{22}=2\sum_{m \ge 1} {1 \over  m^{\alpha+2}} \sum_{0\le l <k \le m-1} \sum_{j \in \Z} \eta_{j+l} \eta_{j+k}.
$$
We use Cauchy-Schwarz to get
$$
|P_{22}| \le 2 \sum_{m \ge 1} {1 \over  m^{\alpha+2}} \sum_{0\le l <k \le m-1} \|\eta\|_{\ell^2}^2.
$$
Since $\ds \sum_{0\le l <k \le m-1}1 =m(m-1)/2$, we obtain
$$
|P_{22}| \le  \|\eta\|_{\ell^2}^2 \sum_{m \ge 1} { m(m-1) \over   m^{\alpha+2}}
= \left( \zeta_{\alpha}-\zeta_{\alpha+1} \right)\|\eta\|_{\ell^2}^2 .
$$
Thus 
$$
\left\vert P_2-\zeta_{\alpha+1} \|\eta\|^2_{\ell^2}\right\vert \le \left( \zeta_{\alpha}-\zeta_{\alpha+1} \right)\|\eta\|_{\ell^2}^2 
$$
or rather
\bes\label{P2 equiv}
\left(2\zeta_{\alpha+1} -\zeta_\alpha\right)\|\eta\|^2_{\ell^2}  \le P_2 \le 
\zeta_{\alpha} \|\eta\|^2_{\ell^2} .
\ees
Therefore
$
2\zeta_{\alpha+1} -\zeta_\alpha > 0 
$
implies that $\sqrt{P_2}$ is equivalent to $\|\eta\|_{\ell^2}$.  

This in combination with \eqref{W eq 1} gives:
\bes\label{W eq 3}
 {2^{\alpha+1} \alpha (\alpha+1) \over 3^{\alpha+2} }\left(2\zeta_{\alpha+1} -\zeta_\alpha\right)\|\eta\|^2_{\ell^2} \le 
 \sum_{j \in \Z} \sum_{m\ge1} W_m(\G_m \eta,\G_m \tilde{r})_j \le 
 2^{\alpha+1} \alpha (\alpha+1) \zeta_{\alpha} \|\eta\|^2_{\ell^2}.
\ees
Which is to say we have \eqref{W eq 1}.

Next up is \eqref{W' est}.
The mean value theorem tell us that
$
W'_m(a,b) = {\alpha (\alpha+1)a/ (m+b_*)^{\alpha+2}}
$
where $b_*$ lies between $b$ and $b+a$
%so 
%$$
%W'_m(\G_m \eta,\G_m \tilde{r})_j = {\alpha (\alpha+1) \over (m+b_*)^{\alpha+2}} \G_m \eta_j
%$$
%where $b_*$ is in between $\G_m \eta_j$ and $\G_m(\tilde{r} + \eta)_j$. 
%
%We demand now that
%\be\label{condition 1}
%\|\tilde{r}\|_{\ell^\infty} \le \tilde{\delta} 
%\mand
%\|\eta\| \le \tilde{\delta} 
%\ee
%where $\tilde{\delta} \le 1/4$.
As above, we have 
 $\|\G_m \tilde{r}\|_{\ell^\infty} \le m/4$ and $\|\G_m \eta\|_{\ell^\infty} \le m/4$. So $b_*$ would be controlled above by $m/2$, for all $j$. Thus
$$
\left| W'_m(\G_m \eta,\G_m \tilde{r})_j \right| \le {2^{\alpha+2} \alpha(\alpha+1) \over m^{\alpha+2}} |\G_m \eta_j|.
$$
We take the $\ell^2$-norm and use the triangle inequality
$$
\left\| W'_m(\G_m \eta,\G_m \tilde{r})\right\|_{\ell^2} \le  {2^{\alpha+2} \alpha(\alpha+1) \over m^{\alpha+2}}
 \|\G_m \eta\|_{\ell^2} \le {2^{\alpha+2} \alpha(\alpha+1) \over m^{\alpha+1}}
 \|\eta\|_{\ell^2}
$$
Thus
$$
\sum_{m\ge1} m\left\| W'_m(\G_m \eta,\G_m \tilde{r})\right\|_{\ell^2} \le {2^{\alpha+2} \zeta_\al \alpha(\alpha+1)}
 \|\eta\|_{\ell^2}.
$$
This is \eqref{W' est}.

To get \eqref{Wb est} is more of the same. We have
$
\partial_b W_m(a,b) = -{\alpha(\alpha+1)(\alpha+2) a^2 /2 (m+b_*)^{\alpha+3}} 
$
with $b_*$ in between $b$ and $b+a$. Much as we did above, we get the estimate
$$
|\partial_b W_m(\G_m\eta,\G_m\tilde{r})_j| \le {2^{\alpha+3} \alpha(\alpha+1)(\alpha+2) \over m ^{\alpha+3}} 
|\G_m \eta_j|^2.
$$
Summing over $j$ and the triangle inequality lead to
$$
\|\partial_b W_m(\G_m\eta,\G_m\tilde{r})\|_{\ell^2} \le {2^{\alpha+3} \alpha(\alpha+1)(\alpha+2) \over m ^{\alpha+3}} 
\|\G_m \eta\|_{\ell^2}^2 \le  {2^{\alpha+3} \alpha(\alpha+1)(\alpha+2) \over m ^{\alpha+2}} 
\|\eta\|_{\ell^2}^2.
$$
Then
$$
\sum_{m \ge 1} m \|\partial_b W_m(\G_m\eta,\G_m\tilde{r})\|_{\ell^2}
\le  {2^{\alpha+3} \alpha(\alpha+1)(\alpha+2) \zeta_{\al+1}} 
\|\eta\|_{\ell^2}^2
$$
and we are done.
\end{proof}

With Proposition \ref{W prop} taken care of, we can now get into the energy argument at
the heart of the proof. We begin with
differentiation of $\H$ to get
$$
\dot{\H} = \sum_{j \in \Z}  \left(\xi_j \dot{\xi}_j + \sum_{m \ge 1}  W'_m(\G_m \eta,\G_m \tilde{r})_j \G_m \dot{\eta}_j\right)+\sum_{j \in \Z} \sum_{m\ge1} \partial_b W_m(\G_m \eta,\G_m \tilde{r})_j \G_m \dot{\tilde{r}}_j.
$$
Call the terms on the right $I$ and $II$ in the obvious way. Using \eqref{err eqn 2} in $I$ gets:
$$
I=\sum_{j \in \Z}  \left(\xi_j \left( \sum_{m \ge 1} \delta_m^- W_m'(\G_m \eta,\G_m \tilde{r}) _j+ \Res_2\right)+ \sum_{m \ge 1} W'_m(\G_m \eta,\G_m \tilde{r})_j\G_m(\delta_1^+ \xi_j + \Res_1 )\right).
$$
Summing by parts and using $\G_m \delta_1^+ = \delta_m^+$ gives some cancellations:
$$
I=\sum_{j \in \Z}  \left(\xi_j  \Res_2+ \sum_{m \ge 1} W'_m(\G_m \eta,\G_m \tilde{r})_j\G_m \Res_1 \right).
$$
Cauchy-Schwarz implies $|I| \le \|\xi\|_{\ell^2} \|\Res_2\|_{\ell_2} + |I_2|$
where
$$
I_2:=
 \sum_{m \ge 1}\sum_{j \in \Z} W'_m(\G_m \eta,\G_m \tilde{r})_j\G_m \Res_1 .
$$
The operator $\G_m$ is symmetric with respect to the $\ell^2$-inner product and so:
$$
I_2=
 \sum_{m \ge 1}\sum_{j \in \Z} \left(\G_m W'_m(\G_m \eta,\G_m \tilde{r})_j\right) \Res_1 .
$$
Reorder the sum again:
$$
I_2=
\sum_{j \in \Z}\Res_1   \sum_{m \ge 1} \left(\G_m W'_m(\G_m \eta,\G_m \tilde{r})_j\right). 
$$
Then Cauchy-Schwarz and the triangle inequality lead to
%$$
%|I_2|\le \|\Res_1\|_{\ell^2}\left\| \sum_{m \ge 1} \G_m W'_m(\G_m \eta,\G_m \tilde{r})
%\right\|_{\ell^2}.
%$$
%Triangle:
%$$
%|I_2|\le \|\Res_1\|_{\ell^2}\sum_{m \ge 1}\left\|  \G_m W'_m(\G_m \eta,\G_m \tilde{r})
%\right\|_{\ell^2}.
%$$
%Since $\G_m = \sum_{l =0}^{m-1} S^l$ and $\|S^l f\|_{\ell^2} = \|f\|_{\ell^2}$ the triangle inequality a second time gives:
$$
|I_2|\le \|\Res_1\|_{\ell_2} \sum_{m \ge 1} \left\|\G_m W'_m(\G_m \eta,\G_m \tilde{r})
\right\|_{\ell^2}\le
\|\Res_1\|_{\ell^2}\sum_{m \ge 1} m \left\|   W'_m(\G_m \eta,\G_m \tilde{r})
\right\|_{\ell^2}.
$$
The estimate \eqref{W' est} from Proposition \ref{W prop} gives
$$
|I_2|  \le C \|\Res_1\|_{\ell^2} \|\eta\|_{\ell^2}.
$$
Thus we have
$$
|I| \le  C\left( \|\Res_1\|_{\ell^2} \|\eta\|_{\ell^2} + \|\Res_2\|_{\ell^2} \|\xi\|_{\ell^2} \right)
$$

Now look at $II$.  By using naive estimates we get
$$
|II| \le \sum_{j \in \Z} \sum_{m \ge 1} |\partial_b W_m(\G_m\eta,\G_m\tilde{r})_j| \|\G_m \dot{\tilde{r}}\|_{\ell^\infty}
%\le\| \dot{\tilde{r}}\|_{\ell^\infty} \sum_{j \in \Z} \sum_{m \ge 1} m |\partial_b W_m(\G_m\eta,\G_m\tilde{r})_j| 
\le\| \dot{\tilde{r}}\|_{\ell^\infty} \sum_{m \ge 1} m  \|\partial_b W_m(\G_m\eta,\G_m\tilde{r})\|_{\ell^1}.
$$
Then \eqref{Wb est} from Proposition \ref{W prop} yields
\bes
\begin{split}
|II| \le
C\| \dot{\tilde{r}}\|_{\ell^\infty} \| \eta\|_{\ell^2}^2.
\end{split}
\ees

So all together 
$$
\dot{\H} \le   C\left( \|\Res_1\|_{\ell^2} \|\eta\|_{\ell^2} + \|\Res_2\|_{\ell^2} \|\xi\|_{\ell^2} \right)
+C\| \dot{\tilde{r}}\|_{\ell^\infty} \| \eta\|_{\ell^2}^2.
$$
Using \eqref{W eq 1} we have:
$$
\dot{\H} \le   C\left( \|\Res_1\|_{\ell^2}  + \|\Res_2\|_{\ell^2}\right)\sqrt{\H}
+C\| \dot{\tilde{r}}\|_{\ell^\infty} \H.
$$
The assumptions made on $\Res_1$, $\Res_2$ and $\dot{\tilde{r}}$ lead to
%$$
%\sup_{|t| \le \tau_0/ \ep^{\al}} \left(\|\Res_1\|_{\ell^2}  + \|\Res_2\|_{\ell^2}\right) \le C \ep^{\beta}
%\mand
%\sup_{|t| \le \tau_0 /\ep^{\al}}\| \dot{\tilde{r}}\|_{\ell^\infty} \le C \ep^{\al}.
%$$
%So we have
$$
\dot{\H} \le   C\ep^\beta\sqrt{\H}
+C\ep^\al \H.
$$
Applying Gr\"onwall's inequality yields
$$
\sqrt{\H(t)} \le \ep^{C \ep^\al t} \sqrt{\H(0)} + C \ep^{\beta - \al} \left(\ep^{C \ep^\al t}-1 \right).
$$
Then we use \eqref{W eq 1} one last time to get
$$
\|\eta(t),\xi(t)\|_{\ell^2\times \ell^2} \le \ep^{C \ep^\al t} \|\bar{\eta},\bar{\xi}\|_{\ell^2\times \ell^2}+ C \ep^{\beta - \al} \left(\ep^{C \ep^\al t}-1 \right).
$$
Taking the supremum over $|t| \le \tau_0/\ep^\al$ and using the assumption on the size of the initial data gives the final estimate in the theorem.
\end{proof}

\section{Proof of Theorem \ref{main}}\label{main thm section}
The proof of Theorem \ref{main} is more or less a direct application of the general approximation theorem, Theorem \ref{gen approx}. There are a few small details to attend to, and that is what we do now.

\begin{proof} (Theorem \ref{main}).
Fix $u(X,\tau)$ a solution of \eqref{BO} subject to the bound described in the statement of the Theorem.
Let $\ds
v(X,\tau) := -\int_0^X u(b,\tau) d b
$ so that $u = -\partial_X v$.
Form $\tilde{x}_j$ as in \eqref{ansatz},
namely $\tilde{x}_j(t) := j + \ep^{\al -2} v(\ep(j-c_\alpha t),\ep^{\al} t)$.
Then put
$$
\tilde{r}_j(t) := -1+\delta_1^+ \tilde{x}_j(t) \mand \tilde{p}_j(t) := \dot{\tilde{x}}_j(t).
$$

We first compute $\Res_1$ and $\Res_2$ as in \eqref{residuals}. 
We have
$$
\Res_1 = \delta_1^+ \tilde{p} - \dot{\tilde{r}}=
\delta_1^+ \left( \dot{\tilde{x}}\right) - \partial_t\left({ -1+\delta_1^+ \tilde{x} }\right) = 0.
$$
For $\Res_2$, we compute
\bes\begin{split}
\Res_2 
&= \sum_{m \ge 1} \delta_m^- V_m'\left( \G_m\tilde{r}\right)_j - \dot{\tilde{p}}_j\\
%&= \sum_{m \ge 1} \delta_m^- V_m'\left( \G_m(-1+\delta_1^+ \tilde{x})\right)_j - \ddot{\tilde{x}}_j\\
%&= \sum_{m \ge 1} \delta_m^- V_m'\left( -m+\G_m\delta_1^+ \tilde{x})\right)_j - \ddot{\tilde{x}}_j\\
&= \sum_{m \ge 1} \delta_m^- V_m'\left( -m+\delta_m^+ \tilde{x})\right)_j - \ddot{\tilde{x}}_j\\
& = -\alpha\sum_{m \ge 1} \left( {1 \over (\tilde{x}_{j+m} - \tilde{x}_j)^{\alpha+1}} -{1 \over (\tilde{x}_{j} - \tilde{x}_{j-m})^{\alpha+1}} 
  \right)- \ddot{\tilde{x}}_j\\
  &=-R_\ep
\end{split}
\ees
with $R_\ep$ as above. Thus Proposition \ref{Res est 1} tells us that the hypothesis on the residuals, \eqref{res hyp}, in Theorem
\ref{gen approx} is met with $\beta = \beta_\al$. Note that $\gamma_\al = \beta_\al-\al$.

The fundamental theorem of calculus gives
\be\label{lw A}
\delta_1^+ (f(\ep \cdot))_j =
f(\ep(j+1))-f(\ep j) = \int_{\ep j}^{\ep j + \ep} f_X(X) dX =  \ep (A_\ep f_X)(\ep j).
\ee
If we use this and the relation $u = -\partial_X v$ we have
\bes\begin{split}
\tilde{r}_j(t) &= -\ep^{\al-1} (A_\ep u)(\ep(j-c_\al t),\ep^{\al} t)\\&=
-\ep^{\al-1} u(\ep(j-c_\al t),\ep^{\al} t)
-\ep^{\al-1} ((A_\ep -1)u)(\ep(j-c_\al t),\ep^{\al} t).
\end{split}\ees
So \eqref{lw sample} and the final estimate in Lemma \ref{A est} gives us
\be\label{tr est}
\sup_{|t| \le \tau_0 /\ep^\al} \|\tilde{r}(t) +\ep^{\al-1} u(\ep(\cdot-c_\al t),\ep^{\al} t)\|_{\ell^2} \le 
C\ep^{\al-1/2}.
\ee
The assumption on the initial conditions in Theorem \ref{main} 
implies
$$
\|r(0) + \ep^{\al-1}u(\ep \cdot,0)\| =\|\bar{\mu}\|_{\ell^2} \le C\ep^{\beta_\al - \al}.
$$
It is easy enough to check that $\ep^{\al -1/2} \le \ep^{\beta_\al - \al}$  and therefore \eqref{tr est}
and the triangle inequality cough up
$$
\|r(0) - \tilde{r}(0)\|_{\ell^2} \le C \ep^{\beta_\al- \al}
$$
which
is one of the hypotheses on the initial data in Theorem \ref{gen approx}. 

Similarly we have
\bes\begin{split}
\tilde{p}_j(t) & = c_\al \ep^{\al -1} u(\ep(j-c_\al t),\ep^\al t) + \ep^{2\al -2} v_\tau(\ep(j-c_\al t),\ep^\al t).
\end{split}\ees
It is straightforward to use \eqref{BO}, the relation $u = -\partial_X v$ and \eqref{lw sample}
to show $\|v_\tau(\ep(\cdot-c_\al t),\ep^\al t)\|_{\ell^2} \le C \ep^{-1/2}$ and so 
\be\label{tp est}
\sup_{|t| \le \tau_0 /\ep^\al} \|\tilde{p}(t) - c_\al \ep^{\al-1} u(\ep(\cdot-c_\al t),\ep^{\al} t)\|_{\ell^2} \le 
C\ep^{2\al-3/2} \le C \ep^{\beta_\al - \al}.
\ee
The assumption on the initial conditions in Theorem \ref{main}  tell us:
$$
\|p(0) -c_\al \ep^{\al-1}u(\ep \cdot,0)\| =\|\bar{\nu}\|_{\ell^2} \le C\ep^{\beta_\al - \al}.
$$
Therefore
$$
\|p(0) - \tilde{p}(0)\|_{\ell^2} \le C \ep^{\beta_\al- \al}
$$
which
is the other hypothesis on the initial data in Theorem \ref{gen approx}.

Next, since,
$
\dot{\tilde{r}}_j(t) = \delta_1^+ \tilde{p}_j(t),
$
\eqref{lw A} and $u = -\partial_X v$ give us
%$$
%\dot{\tilde{r}}_j(t) = \delta_1^+ \left(c_\al \ep^{\al -1} u(\ep(\cdot-c_\al t),\ep^\al t) + \ep^{2\al -2} v_\tau(\ep(\cdot-c_\al t),\ep^\al t)
%\right)_j
%$$
%and so (using $u = -\partial_X v$)
$$
\dot{\tilde{r}}_j(t) = -c_\al \ep^\al (A_{\ep} u_X) (\ep (j - c_\al t),\ep^\al t)  
-\ep^{2 \al -1} (A_{\ep} u_\tau) (\ep (j - c_\al t),\ep^\al t). 
$$
An easy estimate provides
$$
\|\dot{\tilde{r}}(t)\|_{\ell^\infty} \le  c_\al \ep^\al \|A_{\ep} u_X (\cdot,\ep^\al t)\|_{L^\infty}
+\ep^{2 \al -1} \|A_{\ep} u_\tau (\cdot,\ep^\al t)\|_{L^\infty} 
$$
Using Sobolev, followed by the first estimate in Lemma \ref{A est}:
$$
\|\dot{\tilde{r}}(t)\|_{\ell^\infty} \le C \ep^\al \|u (\cdot,\ep^\al t)\|_{H^2}
+C\ep^{2 \al -1} \| u_\tau (\cdot,\ep^\al t)\|_{H^1}.
$$
Then we use \eqref{BO} and the uniform bound on $u$ to get
$$
\sup_{|t|\le \tau_0/\ep^\al}\|\dot{\tilde{r}}(t)\|_{\ell^\infty} \le C \ep^\al + C\ep^{2 \al -1} \le C \ep^{\al}.
$$
This gives the estimate on $\dot{\tilde{r}}$ in \eqref{res hyp}.

We have now checked off all the hypotheses of Theorem \ref{gen approx} and thus its conclusions hold. And so we find that
$$
\sup_{|t| \le \tau/\ep^\al}\| r(t) - \tilde{r}(t)\|_{\ell^2} \le C \ep^{\beta_\al - \al}.
$$
This, together with \eqref{tr est} and the triangle inequality give:
$$
\sup_{|t| \le \tau/\ep^\al}\|r(t) + \ep^{\al -1} u(\ep(\cdot -t),\ep^\al t)\|_{\ell^2} \le C\ep^{\beta_\al - \al}
$$
which is the estimate on $\mu(t)$ in Theorem \ref{main}. The estimate on $\nu(t)$ follows
from $$
\sup_{|t| \le \tau/\ep^\al}\| p(t) - \tilde{p}(t)\|_{\ell^2} \le C \ep^{\beta_\al - \al}.
$$
and \eqref{tp est} in the same way.

\end{proof}

\bibliographystyle{plain}
\bibliography{bo-validation}{}
\end{document}